\theoremstyle{plain}
\newtheorem{theorem}{Theorem}[section]
\newtheorem{corollary}{Corollary}[section]
\newtheorem{lemma}{Lemma}[section]
\newtheorem{proposition}{Proposition}
\theoremstyle{definition}
\newtheorem{example}{Example}
\theoremstyle{remark}
\newtheorem{remark}{Remark}
\numberwithin{equation}{section}
\begin{document}
\title[Semi-harmonicity and Integral Means]
 {Semi-harmonicity, Integral Means \\
and Euler Type Vector Fields}
\author{Chia-chi Tung}
\address{Dept. of Mathematics and
Statistics,\\
         Minnesota State University, Mankato\\
        Mankato, MN 56001} 
\email{chia.tung@mnsu.edu}
\thanks{Supports by Minnesota State University, Mankato 
and the Grant "Globale Methoden in der komplexen Geometrie" 
of the German research society DFG are gratefully 
acknowledged.} 
\keywords{Semi-Riemann domain, $\bar \partial$-Euler 
vector field, $\bar \partial$-Neumann vector field,
semi-harmonicity, weak harmonicity, Dirichlet product} 
\subjclass[2000]{Primary: 31C05;  Secondary:
32C30, 31B10} 
\date{July 3, 2015}

\begin{abstract}
The Dirichlet product of functions on a semi-Riemann 
domain and generalized Euler vector fields, which 
include the radial, $\bar \partial$-Euler, and the 
$\bar \partial$-Neumann vector fields, are 
introduced. The integral means and the harmonic 
residues of functions on a Riemann domain are 
studied. The notion of semi-harmonicity of 
functions on a complex space is introduced. 
It is shown that, on a Riemann domain, the 
semi-harmonicity of a locally forwardly 
$L^2$-function is characterized by local 
mean-value properties as well as by 
weak-harmonicity. In particular, the Weyl's 
Lemma is extended to a Riemann domain. 
\end{abstract}

\maketitle

\section{Introduction}\label{S:intro.}

\footnotemark\footnotetext{
This paper is a corrected version of 
"Semi-Harmonicity, Integral Means
and Euler Type Vector Fields, Adv. 
Appl. Clifford Alg. 17 (2007), 
555-573". This publication is 
available at Springer via 
http://dx.doi.org/DOI 10.1007/s00006-007-0036-9. 
The author would like to apologize 
to its reader for this inconvenience.}%
As higher dimensional analogues 
of Riemann surfaces, the Riemann domains 
played a fundamental role in the early 
development of function theory of several 
complex variables (see \cite{BG60}, pp.
12-13). Such a space is given by a complex manifold 
$\,M\,$ together with a holomorphic map $\, p\,$ of 
$\,M\,$ into a domain in ${\mathbb C}^m\,$ such that 
each fiber of $\,p\,$ is discrete. This allows for 
the consideration of $m$-dimensional domains that
do not lie within ${\mathbb C}^m.\,$  

    It is well-known that on a Euclidean space 
harmonic functions are characterized by their 
mean-value properties over balls and spheres. 
In view of recent interest in non-smooth domains 
in analysis, it seems natural to consider similar 
properties for functions defined on a {\it 
semi-Riemann domain}, where singular 
or branch points as well as some 
non-discrete fibers may exist, thus 
allowing for possibly non-Stein 
parabolic spaces lying over a domain 
in ${\mathbb C}^m.\,$ In this paper 
{\it semi-harmonic functions} on a 
complex space are introduced. It is 
shown that for a continuous function 
on a Riemann domain, the 
"semi-harmonicity" is characterizable 
in terms of the local behaviour of 
the function such as the {\it solid}, 
{\it spherical}, as well as by the 
{\it near}, resp. {\it weak, harmonicity}. 
Furthermore, the Weyl's Lemma can be 
extended
\footnotemark\footnotetext{
The implication "(1) $\Rightarrow$ 
(2)" given in Theorem 4.2 of the 
above quoted paper is not correct 
as it stands unless under some 
further conditions on $\,\phi\,$ 
for instance, if either $\,\phi
\in C^0(X)\,$ or $\,\phi\,$ admits 
locally branchwise $L^1$-direct 
image $\,\hat\phi^{j}\in 
L^1_{{\rm loc}}(U')\,$ at each 
point of $\,X,\,$ for each branch 
of $\,U$ (under such circumstances 
the original proof remains valid).}%
: every locally forwardly $L^2$ 
weakly harmonic or semi-harmonic function 
on a Riemann domain is induced by a 
semi-harmonic function (Corollary 
\ref{C:removable sing. shf.}).

    For later applications to local and global 
characterizations of semi-harmonicity and 
holomorphicity of functions on a normal semi-Riemann 
domain (see \cite{cT07}), a class of generalized 
Euler vector fields is introduced (see \S 2 \& \S 5). 
The point of interest here lies in the fact that the 
Cauchy-Riemann, the $\bar \partial$-Euler, the $\bar 
\partial$-{\it Neumann} as well as the {\it radial} 
vector fields can be globally defined from a unified 
viewpoint. The relation between semi-harmonicity, 
{\it Dirichlet product}, and {\it harmonic residues} 
is also studied. Integral representation of the 
Bochner-Martinelli type and applications will be 
considered in a subsequent paper \cite{cT07}. 

     The author is indebted to the referee for
suggestions and corrections which led to the 
improvement of this paper.

\section{Preliminaries}\label{S:Pre.}  

     In what follows every complex space is assumed 
to be reduced and has a countable topology. The notions 
of $C^k$-differential forms, the exterior 
differentiation $d$, the operators $\,\partial, \> 
\bar \partial\,$ and $\,d^c : \> = (1/4\pi i)
(\partial - \bar \partial)\,$ are well-defined on 
complex spaces despite the presence of singularities 
(see \cite{cT79}, Chap. 4). Let $\,X\,$ be a complex 
space of dimension $\,m > 0\,$ and  $\,D \subseteq X\,$ 
an open subset. Denote by $\,C^\mu (D)\,$ the set of 
all $\,{\mathbb C}$-valued functions of class $C^\mu$ 
(when $\,\mu = \beta,\,$ locally bounded functions) 
on $\,D,\,$ and by $\,A^{k,\mu}(D)\,$ the set of 
$\,{\mathbb C}$-valued $\,k$-forms of class $C^\mu\,$ 
(when $\,\mu = \lambda,\;{\mathbb C}$-valued locally 
Lipschitz $k$-forms (\cite{cT79}, \S 4)) on $\,D.\,$  
The sets $\,C^\mu (\overline D)\,$ and $\,A^{k,\lambda} 
(\overline D)\,$ are similarly defined.

    Denote by $\,\|z\|\,$ the Euclidean norm of 
$\,z = (z_1, \cdots, z_m) \in {\mathbb C}^m,\,$ 
where $\,z_j = x_j + i\, y_j.\,$ Let the space 
$\,{\mathbb C}^m\,$ be oriented so that the form 
$\, {\upsilon}^m :\> = (dd^c \|z\|^2)^m\,$ is 
positive. Let $\,p : X \to {\mathbb C}^m\,$ be
a holomorphic map. Set $\,p^{[a]} :\> = p - p(a),
\; \forall a \in X.\,$ Clearly the form

\begin{equation}\label{E:Euc. form}
\upsilon_p : \> = dd^c \|p^{[a]}\|^2 = ({i \over
2\pi}) \partial \bar \partial \, \|p^{[a]}\|^2
\end{equation}
is non-negative and independent of  $\,a.\,$ 
Observe that the function $\,\|p^{[a]}\|\,$
satisfies the Monge-Amp\`ere equation:

\begin{equation}\label{E:Monge-Amp. equ.}
(dd^c \log \|p^{[a]}\|^2)^m \, = \,0.  
\end{equation}
Therefore the form

\begin{equation}\label{E:Poinc.form b}
\sigma_{a} : = {1 \over \|p^{[a]}\|^{2m}}\,
d^c \|p^{[a]} \|^2 \wedge {\upsilon}^{m-1}_p
\end{equation}
is $d$-closed.

  Let $\,d\upsilon_r,\,$ respectively,
$\,{d\sigma_r},\,$ denote the Euclidean volume
element of the $r$-ball ${\mathbb B}(r),\,$ 
respectively, the $(2m-1)$-sphere
$\,\mathbb{S}(r) = \mathbb{S}^{2m-1}(r),\,$ in
$\,\mathbb{C}^m.\,$ Set $\,d\upsilon_{[a],r} :\> 
= (p^{[a]})^*{d\upsilon_r},\; d\sigma_{[a],r} 
:\> = (p^{[a]})^*{d\sigma_r}.\,$ In particular, 
denoting by $\,t_{(- a')}\,$ the translation: 
$\,z\mapsto z - a'\,$ (where $\,a' \in \mathbb{C}^m
$), the forms $\,d\upsilon_{[a'],r} :\> = {t_{(- a')}}^*
{d\upsilon_r}, \; d\sigma_{[a'],r} :\> =
{t_{(-a')}}^*{d\sigma_r}\,$ are defined. For $\,a 
\in D\,$ and $\,r > 0,\,$ set $\,D_{[a]} (r) : \> = 
\{z \in D \,|\,\|p^{[a]}(z)\| < r\},\; D_{[a]} [r] : \> 
= \{z \in D\, |\,\|p^{[a]}(z)\| \le r\},\,$ and
$\,D (r) : \> = \{z \in D \, |\,\|p(z)\| < r\}.\,$ 
If $\,p =$ the identity map of $\,{\mathbb C}^m,\,$ 
write $\,\mathbb{B}_{[a']} (r) = D_{[a']}(r),\;
\mathbb{S}_{[a']}(r) = \partial {\mathbb B}_{[a']}
(r),\,$ and omit the subscript if $\,a' = 0.\,$ Set 
$\,|\,\mathbb{S}\,| = {\rm vol}\,
(\mathbb{S}(1))\,$ and $\,|\,{\mathbb B}\,| = 
{\rm vol}\,(\mathbb{B}(1))$.

    A complex space $\,X\,$ together with a
holomorphic map $\, p : X \to \Omega,\,$ where
$\,\Omega\,$ is a domain in $\,\mathbb{C}^m$, is 
called a {\it semi-Riemann domain} (over $\,\Omega$) 
iff there exists a thin analytic set $\,\Sigma\,$ in 
$\,\Omega\,$ for which the inverse image $\,\Sigma_p : 
= p^{-1}(\Sigma)\,$ is thin in $\,X\,$ and the 
restriction $\,p : X^0 :\> = X \,\backslash \,
\Sigma_p \to \Omega_0 :\> = \Omega \, \backslash 
\,\Sigma\,$ has discrete fibers. If $\,\Sigma = 
\emptyset, \,$ then $\,(X,p)\,$ is a {\it Riemann domain} 
in the sense of \cite{BG60}, p. 19 and \cite{GR79}, p.
135, (also \cite{FS87}, p. 116, where $\,X\,$ is
assumed a normal space). If $\,p : X \to \Omega\,$ is in
addition a local homeomorphism, then $\,(X,p)\,$
is said to be {\it unramified}. Every proper 
holomorphic map of a pure $m$-dimensional complex 
space into a domain $\,\Omega \subseteq \mathbb{C}^m\,$ 
of strict rank $\,m\,$ is a semi-Riemann domain 
(\cite{AS71}, p. 117).

    If $\,Y\,$ is an irreducible and locally 
irreducible space and $\,f : X \to Y\,$ is a light, 
proper, holomorphic map, then by the Andreotti-Stoll's 
theorem (\cite{wS66}, Lemma 2.3; \cite{AS71}, Lemma 2.2), 
the map $\,f : X \to Y\,$ is an analytic covering with 
sheet number $\,s = \deg\,(f)\,$ given by

\begin{equation}\label{E:sheet no.}
\deg\,(f) : = \,\sum \, \{\nu_f^y (z)\,|\, z \in f^{-1}(y)\}, 
\quad \forall  y \in Y.
\end{equation}
where $\,\nu_f^y (z)\,$ denotes the {\it multiplicity of 
$\,f\,$ at} $\,z\,$ (\cite{wS66}, p. 22).

   Unless otherwise mentioned, let $\,p = (p_1,
\cdots, p_m) : X \to \Omega\,$ be a semi-Riemann 
domain of dimension $\,m > 0.\,$ For each $\,a \in 
D^0 :\> = D \cap X^0,\,$ there exists an open 
neighborhood $\,N\,$ with closure in $\,D^0\,$ such 
that: i) $p^{-1}(y) \cap \overline N 
= \{a\},\,$ where $\,a': = p(a)$; ii) for a sufficiently 
small ball $\,U' = \mathbb{B}_{[a']}(\rho)\,$ in $\,
\mathbb{C}^m,\; U_a :\> = p^{-1}(U')\cap N = p^{-1}(U')
\cap \overline N\,$ is connected and the mapping $\,p 
\rfloor U_a : U_a \to U'\,$ is an analytic covering; 
iii) every branch $\,V^k,\; k = 1,\cdots, s_a,\,$ 
of $\,U_a\,$ contains $\,a$; and iv)

\begin{equation}\label{E:loc.deg.form.}
s_a = \,\deg \,(p \rfloor U_a) =
\,\nu_p^{a'}(a)
\end{equation}
(\cite{wS66}, Proposition 1.3). For convenience call
such $\,U_a\,$ a {\it pseudo-ball} ({\it of
radius} $\,\rho$) {\it at} $\,a.\,$  Let $\,X^*\,$ be 
the largest open subset of $\,X\,$ on which $\,p\,$ 
is locally biholomorphic, and set $\,D^* : \> = D \cap 
X^*.\,$

   Let $\,W \subseteq X^*\,$ be an open set and   
$\,T_a (W)\,$ the real tangent space at a point 
$\,a \in W.\,$ Denote by $\,(\;,\;)\,$ the Euclidean 
inner product (induced under $\,p$) on the tangent 
space $\,T_a (W),\; a \in W.\,$ It extends naturally 
to a bilinear form on the complexified tangent space 
$\,\mathbb{C}T_a (W).\,$ The gradient vector field 
$\,\nabla \phi\,$ of a $\,C^1$-function $\,\phi : W 
\to \mathbb{C}\,$ is then well-defined in the standard 
fashion; thus setting $\,\tilde x_j = {\rm Re}\>(p_j),
\; \tilde y_j = {\rm Im} \>(p_j),\; 1 \le j \le m,\,$
the partial derivatives

\[
\phi_{_{\tilde x_j}} =
\,{\partial \phi\over \partial \tilde x_j}
 : \> = (\nabla \phi, {\partial \over \partial \tilde x_j})
;\;\;\; \phi_{_{\tilde y_j}}  =
{\partial \phi \over \partial \tilde y_j} :
 \> = (\nabla \phi, {\partial \over \partial \tilde y_j}),
\]
are well-defined. The {\it Cauchy-Riemann}, respectively 
{\it anti-Cauchy-Riemann, vector fields}, are given 
by

\[
\bar \partial_k = {\partial \over 
\partial \bar p_{_k}} : = \,{1\over 2}\,{\nabla p_{_k}},
\quad {\rm respectively},\;\;\partial_k = {\partial 
\over \partial p_{_k}} : = \,{1\over 2}\, {\nabla 
\bar p_{_k}},
\] 
in $\,X^*\,$ for $\,k = 1,\cdots, m.\,$ More generally, 
if $\,g \in C^1 (D)\,$ define the associated $\,\partial$-, 
respectively, $\bar \partial$-, {\it Euler type vector 
field} 

\begin{equation}\label{E:Euler vec.field}
\begin{split}
{\mathcal E}_g\,= \, 2\,\sum_{k=1}^m \,{\, g_{
\bar p_k}\, \partial_k },\\
\bar {\mathcal E}_g\,= \, 2\,\sum_{k=1}^m \,{\, g_{
p_k}\, \bar \partial_k },  
\end{split}
\end{equation}
in $\,D^*,\,$ where $\,g_{_{\bar p_k}} : = \,\bar 
\partial_k g,\; g_{_{p_k}} : = \,\partial_k g.\,$ 
Observe that if $\,h \in {\mathcal
O}(D)\,$ (resp. $\,h \in \overline {\mathcal
O}(D)$), the set of all holomorphic (resp. 
anti-holomorphic) functions, then $\,\nabla h =
\bar {\mathcal E}_h \,$ (resp. $\,\nabla h
= {\mathcal E}_h$) is an Euler type vector field. 
To each continuous mapping $\, \xi = (\xi_1, \cdots, 
\xi_{2m}) : W \to \mathbb{C}^{2m}\,$ is associated a 
(complex) vector field

\[
\partial_{_\xi} \,: = \,\sum_{k=1}^m
{\big(\xi_{_{2k-1}} \frac {\partial}{\partial
\tilde x_k}\,+ \, \xi_{_{2k}} \frac {\partial}
{\partial \tilde y_k}\big) },
\]
which, for notational convenience, shall be identified 
with $\,\xi.\,$ It follows that

\begin{equation}\label{E:comp.dir.deri.}
\partial_{_{\nabla g}}\,= \, {\mathcal E}_g \,+\, 
\bar {\mathcal E}_g \,= \,\sum_{k=1}^m \nabla_k \,g,
\end{equation}
where

\begin{equation}\label{E:part.dir.deri.}
\nabla_k \,g \> := \, 2 \, (g_{_{p_k}}\, \bar
\partial_k \, +\, g_{_{\bar p_k}}
\, \partial_k)
\end{equation}
is the $k$-th {\it partial gradient vector field} 
of $\,g.\,$ The following Lemma, which gathers some 
useful identities, is easily established:

\begin{lemma}\label{L:Dir. and semi-Laplace oper.} 
{\rm (1)} If $\,\phi,\; \psi \in C^1 (D^*),\,$ 
then

\begin{equation}\label{E:Dir. and dir. deri.}
d \phi \wedge d^c \psi \wedge \upsilon_p^{m-1} 
= {1 \over 4m}\,\partial_{_{\nabla \phi}} 
(\psi)\,\upsilon_p^m.
\end{equation}
{\rm (2)} If $\,g \in C^2(D^*),\,$ then for each 
pseudo-ball $\,U \subseteq D^*,\,$ {\rm (i)}

\begin{equation}\label{E:semi-Laplace operator}
dd^c \,(g \,{\upsilon}_p^{m-1}) = {1\over 4m}
\,(\triangle_{{p_{_U}}} g)\,{\upsilon}_p^m,
\end{equation}
where $\,\triangle_{p_{_U}}$ denotes the 
$\,p_{_U}$-pull-back of the Laplace operator of the
Euclidean metric on $\,\mathbb{C}^m;\,$ {\rm (ii)}

\begin{equation}\label{E:L.C.R.equa.}
\partial_{_{\nabla g}} (\phi) \,
=\,{1\over 2}\,(\triangle_{p_{_U}} (g\,\phi) \,
- \, g \,\triangle_{p_{_U}} \phi\,- \,\phi\,
\triangle_{p_{_U}} g), \quad \forall \phi \in
C^2 (U).
\end{equation}
\end{lemma}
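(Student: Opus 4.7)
The plan is to reduce all three identities to coordinate calculations on a chart where $p$ is biholomorphic. Every pseudo-ball $U\subseteq D^{*}$ sits inside the locus $X^{*}$ where $p$ is locally biholomorphic, so on each branch of $U$ the components $(p_{1},\ldots,p_{m})$ — equivalently the real coordinates $(\tilde x_{k},\tilde y_{k})$ — serve as local holomorphic coordinates. In these coordinates $\upsilon_{p}$ becomes the flat $(1,1)$-form $\frac{i}{2\pi}\sum dp_{k}\wedge d\bar p_{k}$ of $\mathbb{C}^{m}$, and $\triangle_{p_{U}}$ is the Euclidean Laplacian. Since both sides of each identity are continuous on $D^{*}$ and of the same tensorial type, verifying them on such a chart suffices.

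The main computational tool for (1) is the contraction formula
\[
dp_{j}\wedge d\bar p_{k}\wedge \upsilon_{p}^{m-1}=-\frac{2\pi i}{m}\,\delta_{jk}\,\upsilon_{p}^{m},
\]
which I would derive by writing $\upsilon_{p}^{m-1}=(m-1)!\sum_{l}\omega_{1}\wedge\cdots\wedge\widehat{\omega_{l}}\wedge\cdots\wedge\omega_{m}$ with $\omega_{l}=\frac{i}{2\pi}dp_{l}\wedge d\bar p_{l}$, and noting that only $l=j=k$ yields a nonzero top form. Next, expand $d\phi=\partial\phi+\bar\partial\phi$ and $d^{c}\psi=\frac{1}{4\pi i}(\partial\psi-\bar\partial\psi)$; the bidegree-$(2,0)$ and $(0,2)$ pieces $\partial\phi\wedge\partial\psi$ and $\bar\partial\phi\wedge\bar\partial\psi$ vanish against $\upsilon_{p}^{m-1}$ for dimensional reasons, and the contraction formula reduces the left-hand side to $\frac{1}{2m}\sum_{k}(\phi_{p_{k}}\psi_{\bar p_{k}}+\phi_{\bar p_{k}}\psi_{p_{k}})\,\upsilon_{p}^{m}$. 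Unpacking the right-hand side via the definition of $\nabla_{k}\phi$ in \eqref{E:part.dir.deri.} gives the same expression.

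For (2)(i) the first step is to observe that $\upsilon_{p}^{m-1}$ is both $d$- and $d^{c}$-closed: $d\upsilon_{p}=0$ since $\upsilon_{p}$ is $dd^{c}$-exact, while $\partial\upsilon_{p}=\bar\partial\upsilon_{p}=0$ because $\upsilon_{p}$ has constant coefficients in the $p$-coordinates. Hence $dd^{c}(g\,\upsilon_{p}^{m-1})=dd^{c}g\wedge\upsilon_{p}^{m-1}$. Writing $dd^{c}g=\frac{i}{2\pi}\partial\bar\partial g=\frac{i}{2\pi}\sum_{j,k}g_{p_{j}\bar p_{k}}\,dp_{j}\wedge d\bar p_{k}$ and applying the same contraction formula produces $\frac{1}{m}\sum_{k}g_{p_{k}\bar p_{k}}\,\upsilon_{p}^{m}$; comparing with $\triangle_{p_{U}}=4\sum_{k}\partial_{k}\bar\partial_{k}$ (the Euclidean Laplacian in complex form) delivers the factor $\frac{1}{4m}$.

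Part (2)(ii) is the classical Leibniz identity $\triangle(g\phi)=g\,\triangle\phi+\phi\,\triangle g+2(\nabla g,\nabla\phi)$, applied after checking that $\partial_{\nabla g}(\phi)=(\nabla g,\nabla\phi)$; this last equality is a routine conversion between $(\tilde x_{k},\tilde y_{k})$- and $(p_{k},\bar p_{k})$-derivatives via $\partial_{k}=\frac{1}{2}(\partial_{\tilde x_{k}}-i\partial_{\tilde y_{k}})$ and its conjugate. No single step poses a real obstacle; the principal care needed is bookkeeping the numerical constants coming from the conventions $d^{c}=\frac{1}{4\pi i}(\partial-\bar\partial)$ and $\upsilon_{p}=dd^{c}\|p^{[a]}\|^{2}$, and checking that every object is well-defined on $D^{*}$ independently of which branch is used — which holds because each branch is biholomorphic to $U'$ via $p$.
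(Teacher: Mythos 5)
Your proof is correct, and it is essentially the intended argument: the paper states this lemma without proof (``easily established''), and the natural verification is exactly the coordinate computation you carry out on a chart of $X^{*}$ where $p$ is biholomorphic, so that $\upsilon_{p}=\frac{i}{2\pi}\sum_{k}dp_{k}\wedge d\bar p_{k}$. I checked the constants: your contraction identity $dp_{j}\wedge d\bar p_{k}\wedge\upsilon_{p}^{m-1}=-\frac{2\pi i}{m}\,\delta_{jk}\,\upsilon_{p}^{m}$ is right, and combined with $d^{c}=\frac{1}{4\pi i}(\partial-\bar\partial)$, $\partial_{\nabla\phi}(\psi)=2\sum_{k}(\phi_{p_{k}}\psi_{\bar p_{k}}+\phi_{\bar p_{k}}\psi_{p_{k}})$ from \eqref{E:comp.dir.deri.}--\eqref{E:part.dir.deri.}, and $\triangle_{p_{U}}=4\sum_{k}\partial_{k}\bar\partial_{k}$, all three numerical factors come out as stated.
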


\section{Integral averages}\label{S:Int.aver.} 

     Denote by $\,dD\,$ the (maximal) {\it boundary 
manifold} of $\,{\mathcal R}(D)\,$ in $\,{\mathcal
R}(X),\,$ the manifold of simple points of $\,X,\,$ 
oriented to the exterior of $\,{\mathcal R}(D)$ 
(\cite{cT79}, p. 218). If $\,\overline U\,$ is 
compact, $\,a \in U^0,\,$ and $\,\phi \in C^0 
(U_{[a]} [r_0]),\,$ define the {\it solid}, 
resp. {\it spherical, mean-value function}
of $\,\phi\,$ (with resp. to $\,p^{[a]}$) by

\begin{equation}\label{E:solid m.v.}
\langle \phi \rfloor U \rangle_{a,r}: \> = {1 \over
r^{2m}} \mathop{\int}_{U_{[a]}(r)}{\phi \,
\upsilon^m_p}, \quad  \forall r \in
(0,r_0), 
\end{equation}

\begin{equation}\label{E:spher.m.v.}
[\phi \rfloor U]_{a,r} : \> =
\mathop{\int}_{dU_{[a]}(r)}{\phi
\,\sigma_{a}}, \quad  \forall r \in
(0,r_0). 
\end{equation}

   Let $\,\phi \in C^0 (D)$ and $a \in D^0.\,$
Then: $\phi \,$ is said to have (1) the (local) 
{\it solid mean-value property at} $\,a\,$ iff 
there exists a pseudo-ball $\,U \subseteq D\,$ 
at $\,a\,$ of radius $\,r_0 > 0\,$ such that

\begin{equation}\label{E:solid mean.}
\langle \phi \,\rfloor U \rangle_{a,r} =
\nu_p(a) \,\phi (a) \quad \forall r \in (0,r_0).
\end{equation}
(2) the (local) {\it spherical mean-value property 
at} $\,a\,$ iff there exists a pseudo-ball $\,U 
\subseteq D\,$ at $\,a\,$ of radius $\,r_0 > 0\,$ 
such that

\begin{equation}\label{E:spher. mean.}
[\phi \rfloor U]_{a,r} =  \nu_p(a) \,\phi
(a), \quad \forall r \in (0,r_0). 
\end{equation}

     Given a pseudoball $\,U\,$ in 
$\,X,\,$ choose a $C^\infty$-partition 
of unity $\,\{(U^j_\nu,\rho^j_\nu)\}
\,$ in terms of the local (covering) 
sheets $\,U^j_\nu\,\subseteq U_{*}\,$ 
contained in a branch $\,V^j.\,$ 
Given $\,\phi\in L^1_{{\rm loc}}
(U)$ (respectively, $\,C^0(U)$), 
set $\,\phi^j_{_\nu} := \rho^j_\nu
\phi\,$ for each pair $\,(j,\nu);\,$ 
there exists an induced function 
$\,\hat\phi^j_{_\nu}\in L^1_{{\rm loc}}
((U^j_\nu)')$ (respectively, 
$\,C^0((U^j_\nu)')$) such that $\,
p^*\hat\phi^j_{_\nu} = \phi^j_{_\nu}
\,$ on $\,U^j_\nu.\,$ The following 
Lemma shows that the local solid 
and the spherical mean-value 
properties of $\,\phi\,$ are 
equivalent:

\begin{lemma}\label{L:const. aver.} 
   Let $\,U\,$ be a pseudo-ball at 
a point $\,a\in X^0\,$ of radius $\,r_0$. 
Then for each $\,\phi\in  C^0(\overline U)$, 
the following are equivalent:

{\rm (a)} $\,\langle \phi \rfloor U \rangle_{a,r}
= {\rm const.} = A, \; \forall r \in (0,r_0)$;

{\rm (b)} $\,[\phi \rfloor U]_{a,r} =
{\rm const.} = A, \; \forall r \in (0,r_0)$.
\end{lemma}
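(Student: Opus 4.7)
The plan is to reduce the problem to the classical coarea relation in Euclidean space by pushing both integrals down along the analytic covering $p\rfloor U : U \to U' = \mathbb{B}_{[a']}(\rho)$ to the base $U'\subseteq\mathbb{C}^m$.

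First I would produce a direct image of $\phi$ on $U'$. Using the partition of unity $\{(U^j_\nu,\rho^j_\nu)\}$ introduced just before the statement, the pieces $\phi^j_{_\nu} := \rho^j_\nu\phi$ admit continuous direct images $\hat\phi^j_{_\nu}\in C^0((U^j_\nu)')$ with $p^*\hat\phi^j_{_\nu} = \phi^j_{_\nu}$. Extending by zero and summing gives $\hat\phi := \sum_{j,\nu}\hat\phi^j_{_\nu}$, a bounded measurable function on $U'$ that is continuous off the thin branch locus $\Sigma_p\cap U'$. Since $\upsilon_p^m = p^*\upsilon^m$ and $d^c\|p^{[a]}\|^2\wedge\upsilon_p^{m-1} = p^*(d^c\|z-a'\|^2\wedge\upsilon^{m-1})$, integration along the fibres of $p$ yields
\[
 r^{2m}\,\langle\phi\rfloor U\rangle_{a,r}\;=\;\int_{\mathbb{B}_{[a']}(r)}\hat\phi\,\upsilon^m\;=:\;F(r),\qquad [\phi\rfloor U]_{a,r}\;=\;\int_{\mathbb{S}_{[a']}(r)}\hat\phi\,\sigma_{a'}\;=:\;G(r),
\]
for every $r\in(0,r_0)$, where $\sigma_{a'}$ denotes the Euclidean version of $\sigma_a$ based at $a'$.

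Next I would establish the Euclidean coarea relation $F(r) = \int_0^r 2m\,t^{2m-1} G(t)\,dt$ on $(0,r_0)$. This follows by polar decomposition: writing $z - a' = t\omega$ with $\omega\in\mathbb{S}(1)$, one has $\upsilon^m = \kappa\,t^{2m-1}\,dt\wedge dA_\omega$ for a positive constant $\kappa$, while a short computation (or applying Stokes to the exact $(2m-1)$-form $d^c\|z-a'\|^2\wedge\upsilon^{m-1}$, whose differential is $\upsilon^m$) shows that $\sigma_{a'}$ restricts on each $\mathbb{S}_{[a']}(t)$ to $\kappa\,dA_\omega/(2m)$; Fubini then produces the claim, and in particular $F'(r) = 2m\,r^{2m-1}G(r)$ almost everywhere on $(0,r_0)$, with $F(0) = 0$.

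The equivalence is now immediate. If (a) holds, then $F(r) = A\,r^{2m}$ on $(0,r_0)$, and differentiating yields $G(r) = A$ for every $r\in(0,r_0)$; conversely, if (b) holds, then $F(r) = \int_0^r 2m\,t^{2m-1} A\,dt = A\,r^{2m}$, giving $\langle\phi\rfloor U\rangle_{a,r} = A$. The only delicate point is the behaviour of $\hat\phi$ near the branch locus $\Sigma_p\cap U'$, but since that set has Lebesgue measure zero in $U'$, the Euclidean coarea argument applies unaltered with $\hat\phi$ merely bounded and measurable, and the equivalence then transfers back to $U$ through the pushforward.
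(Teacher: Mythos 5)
Your proof is correct and follows essentially the same route as the paper: both push the integrals down to the base $U'$ through the partition of unity $\{(U^j_\nu,\rho^j_\nu)\}$ and then invoke the Euclidean polar-coordinate (ball--sphere) relation, integrating for (b)$\Rightarrow$(a) and differentiating for the converse. The only difference is presentational --- you assemble a single bounded measurable direct image $\hat\phi$ on $U'$, whereas the paper keeps the branchwise sums $\sum_\nu'$ explicit --- and your implicit use of the continuity of $r\mapsto[\phi\rfloor U]_{a,r}$ in the differentiation step is harmless since it follows from bounded convergence off the measure-zero branch locus.
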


\begin{proof} 
Observe that, if $\hat \phi \in C^0 
(\mathbb{B}_{[a']}[r_0]),\,$ then

\begin{equation}\label{E:ball and sphere}
\mathop{\int}_{\mathbb{B}_{[a']}(r)}
{\hat\phi\,d \upsilon_{[a'],r}} = \int_0^r
\big(\mathop{\int}_{\mathbb{S}_{[a']}(t)}{\hat
\phi \,d\sigma_{[a'],t}}\big)\, dt, \quad 0 
< r < r_0. 
\end{equation}
Assume at first that $\,
[\phi \rfloor U]_{a,t} 
= A,\;\forall t\in (0,r_0)$. 
Denoting by $\,{\mathfrak s}_a\,$ 
the number of irreducible branches 
of $\,U,\,$ one has for such 
$\,t,\,$ 

\[
  \begin{split}
  A\,|\,\mathbb{S}\,|\,t^{2m-1} =
  \mathop{\int}_{dU_{[a]}(t) \backslash T}{\phi
  \,d \sigma_{[a],t}}
  & = \,\sum_{k=1}^{{\mathfrak s}_a}{\sum_{\nu}}'
  \mathop{\int}_{dV^k_{[a]}(t)\cap U^k_\nu\backslash T}
  {\phi^k_{_\nu}\,d\sigma_{[a],t}} \\
  & = \,\sum_{k=1}^{{\mathfrak s}_a}{\sum_{\nu}}'
  \mathop{\int}_{\mathbb{S}_{[a']}(t)\cap (U^k_\nu)'}
  {\hat\phi^k_{_\nu}\,d\sigma_{[a'],t}}, 
  \end{split}
\]
where $\,{\sum_{\nu}}'\,$ denotes
the limit of a sum of integrals
(or functions) over the indices $\nu$ 
of an (increasing) covering of $\,K_n
\cap V^k\,$ by the $\,U^k_\nu,\,
\{K_n\}\,$ being a (strictly 
increasing) exhausting sequence of 
compact subsets of $\,U^{*}.\,$ 
Thus, integrating the above relation 
over $(0,r)$, $\,0 < r < r_0,\,$ 
yields

\[
{r^{2m}\over {2m}}\,|\,\mathbb{S}\,| A =
\,\sum_{k=1}^{{\mathfrak s}_a}{\sum_{\nu}}'
\mathop{\int}_{\mathbb{B}_{[a']}(r)
\cap (U^k_\nu)'}{\hat\phi^k_{_\nu}
\,d\upsilon_{[a'],r}} = \,\sum_{k=1}^{{\mathfrak s}_a} 
\mathop{\int}_{{V^k_{[a]}(r)\backslash T}}
{\phi\,d\upsilon_{[a],r}}.
\]
Therefore

\[
A = {1 \over r^{2m}} 
\mathop{\int}_{U_{[a]}(r)}
{\phi\,{\upsilon_{p^{[a]}}}^m} = \langle\phi
\rfloor U \rangle_{a,r}, \quad 0 < r < r_0.
\]
Similarly, if $\,\langle \phi \rfloor
U \rangle_{a,r} = A, \; \forall r \in (0,r_0)$,
then $[\phi \rfloor U]_{a,r} = A, \;
\forall r \in (0,r_0)$.
\end{proof}

     Of importance to harmonic function theory 
is the {\it Dirichlet product}, which, on a 
semi-Riemann domain, can be defined as follows: 
if $\,\eta,\;\phi : D \to \mathbb{C}\,$ are locally 
Lipschitz functions (\cite{cT79}, \S 4), set

\begin{equation}\label{E:Dr.pr.}
\lbrack \eta,\phi \rbrack_{_D} \,: =
\mathop{\int}_{_D}{d \eta \wedge d^c \bar \phi 
\wedge\upsilon_p^{m-1}},
\end{equation}
provided the integral exists. (For further properties 
and applications of this product, see \cite{cT07}). 
The definition \eqref{E:Poinc.form b} and the Stokes' 
theorem (\cite{cT79}, (7.1.3)) imply the following

\begin{lemma}\label{L:relat. between solid and spher. aver.} 
Let $\,\eta ,\,\phi \in C^1 (D)\,$ and $\,a\ 
\in D^0.\,$ Then for every neighborhood $\,U\,$ 
of $\,a\,$ and $\,r_0 > 0\,$ such that $\,U_{[a]}
(r_0) \subset \subset D$,

\begin{equation}\label{E:rel.spher. and solid MV}
[\phi \rfloor U]_{a,r} = \,
\langle \phi \rfloor U \rangle_{a,r} +
{1 \over r^{2m}}\, [\phi, \|p^{[a]}\|^2]_{a,r},\quad 
\forall r \in (0, r_0).
\end{equation}
\end{lemma}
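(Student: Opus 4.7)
The plan is to apply Stokes' theorem directly to the integrand in the spherical mean-value definition, using the fact that $\upsilon_p^{m-1}$ is already built into the $d$-closed form $\sigma_a$ from \eqref{E:Poinc.form b}.

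First I would exploit the fact that on $dU_{[a]}(r)$ the function $\|p^{[a]}\|^2$ is identically $r^2$, so the definition \eqref{E:Poinc.form b} gives
\[
[\phi \rfloor U]_{a,r} \;=\; \mathop{\int}_{dU_{[a]}(r)} \phi\,\sigma_a \;=\; \frac{1}{r^{2m}} \mathop{\int}_{dU_{[a]}(r)} \phi\, d^c\|p^{[a]}\|^2 \wedge \upsilon_p^{m-1}.
\]
Then I would apply Stokes' theorem (\cite{cT79}, (7.1.3)) on the relatively compact set $U_{[a]}(r)$ to the $(2m{-}1)$-form $\phi\, d^c\|p^{[a]}\|^2 \wedge \upsilon_p^{m-1}$. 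Since $\upsilon_p = dd^c\|p^{[a]}\|^2$ is $d$-closed and $d(d^c\|p^{[a]}\|^2) = \upsilon_p$, the exterior derivative evaluates cleanly as
\[
d\bigl(\phi\, d^c\|p^{[a]}\|^2 \wedge \upsilon_p^{m-1}\bigr) \;=\; d\phi \wedge d^c\|p^{[a]}\|^2 \wedge \upsilon_p^{m-1} \;+\; \phi\, \upsilon_p^m.
\]
So after dividing by $r^{2m}$, the boundary integral splits as a sum of two volume integrals over $U_{[a]}(r)$.

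Next I would identify each term. The second integral, $\frac{1}{r^{2m}}\int_{U_{[a]}(r)} \phi\,\upsilon_p^m$, is precisely $\langle \phi \rfloor U\rangle_{a,r}$ by \eqref{E:solid m.v.}. Since $\|p^{[a]}\|^2$ is real-valued, the first integral $\int_{U_{[a]}(r)} d\phi \wedge d^c \|p^{[a]}\|^2 \wedge \upsilon_p^{m-1}$ matches the Dirichlet product \eqref{E:Dr.pr.} over the region $U_{[a]}(r)$, giving $[\phi, \|p^{[a]}\|^2]_{a,r}$. Combining yields the claimed identity.

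The main obstacle is justifying the application of Stokes' theorem on the pseudo-ball $U_{[a]}(r)$. Although $a \in D^0$ guarantees discrete fibers and an analytic covering $p\rfloor U : U \to U'$, the branch locus of $p$ inside $U$ and the possible intersection of $dU_{[a]}(r)$ with the singular set $\mathcal{S}(X)$ mean one must invoke the complex-space version of Stokes' theorem from \cite{cT79}, where the boundary manifold $dU_{[a]}(r)$ is the oriented manifold of simple points. For generic $r \in (0,r_0)$ the integrand is $C^1$ and the thin set of branch points contributes nothing, so the hypotheses of \cite{cT79}, (7.1.3) are satisfied; the identity then extends to all $r \in (0,r_0)$ by continuity in $r$ of both sides.
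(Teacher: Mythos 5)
Your proof is correct and is essentially the argument the paper intends: the paper's "proof" consists precisely of the remark that the identity follows from the definition \eqref{E:Poinc.form b} of $\sigma_a$ (using $\|p^{[a]}\|^2 \equiv r^2$ on the boundary) together with Stokes' theorem (\cite{cT79}, (7.1.3)), which is exactly the computation you carry out, including the correct observation that the conjugate in the Dirichlet product is harmless since $\|p^{[a]}\|^2$ is real.
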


   A function $\,\phi \in C^0 (D)$ is called (locally) 
{\it nearly harmonic at} $\,a \in D^0\,$ iff there exists 
a pseudo-ball $\,U \subseteq D\,$ at $\,a\,$ of radius 
$\,r_0 > 0\,$ such that

\begin{equation}\label{E:near harm}
[\phi \rfloor U]_{a,r} =
\langle \phi \rfloor U \rangle_{a,r}, \quad
\forall r \in (0,r_0). 
\end{equation}

\begin{lemma}\label{L:near harm.} 
A function $\,\phi \in C^0 (D)$ is 
nearly harmonic at point $\,a\in D^0
\,$ iff there exists a pseudo-ball 
$\,U\subseteq D\,$ at $\,a\,$ of 
radius $\,r^* > 0$ such that

\[
[\phi \rfloor U]_{a,r} = {\rm
const}., \quad \forall r \in(0, r^*).
\]
\end{lemma}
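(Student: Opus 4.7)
The plan is to combine Lemma~\ref{L:const. aver.} with a co-area identity (essentially the non-constant version of the identity already used to prove Lemma~\ref{L:const. aver.}), and then differentiate.

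For the direction $(\Leftarrow)$, suppose there exists a pseudo-ball $U\subseteq D$ at $a$ of radius $r^*>0$ such that $[\phi\rfloor U]_{a,r}=A$ is constant for $r\in(0,r^*)$. Then by Lemma~\ref{L:const. aver.}, $\langle\phi\rfloor U\rangle_{a,r}=A$ on the same interval, so $[\phi\rfloor U]_{a,r}=\langle\phi\rfloor U\rangle_{a,r}$ on $(0,r^*)$, i.e.\ $\phi$ is nearly harmonic at $a$ (with the same $U$).

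For the direction $(\Rightarrow)$, let $U$ be the pseudo-ball of radius $r_0>0$ witnessing \eqref{E:near harm}. First I would repeat the partition-of-unity/co-area computation in the proof of Lemma~\ref{L:const. aver.} \emph{without} substituting a constant value for the spherical mean. Branch-by-branch, pulling back to $\mathbb{B}_{[a']}(r)$ via $p\rfloor U_a$ and applying \eqref{E:ball and sphere} yields, after absorbing the normalization constants using $|\mathbb{S}|=2m|\mathbb{B}|$, the identity
\[
r^{2m}\,\langle\phi\rfloor U\rangle_{a,r}\;=\;2m\int_0^r t^{2m-1}\,[\phi\rfloor U]_{a,t}\,dt,\qquad r\in(0,r_0).
\]
Setting $h(r):=[\phi\rfloor U]_{a,r}$ and invoking the near-harmonicity hypothesis $\langle\phi\rfloor U\rangle_{a,r}=h(r)$, this becomes
\[
r^{2m}h(r)\;=\;2m\int_0^r t^{2m-1}h(t)\,dt.
\]
Since $\phi\in C^0(\overline U)$, the function $h$ is continuous on $(0,r_0)$, so the right-hand side is $C^1$ in $r$. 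Differentiating and cancelling $2m r^{2m-1}h(r)$ from both sides gives $r^{2m}h'(r)\equiv 0$ on $(0,r_0)$, i.e.\ $h$ is constant there. Taking $r^*=r_0$ completes the proof.

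The main technical step is the derivation of the unreduced co-area identity on the pseudo-ball; this is essentially the computation already carried out in Lemma~\ref{L:const. aver.}, but it must be done keeping $[\phi\rfloor U]_{a,t}$ as a function of $t$, and it relies on the covering decomposition via $(U^k_\nu,\rho^k_\nu)$ together with the thinness of the branch locus $T$. Once this identity is in hand, the remainder is an elementary differentiation.
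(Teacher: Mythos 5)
Your proposal is correct and follows essentially the same route as the paper: the converse direction is the same appeal to Lemma~\ref{L:const. aver.}, and the forward direction is the same co-area argument via \eqref{E:ball and sphere} followed by differentiation in $r$ — the paper phrases it as the ODE $S'(r)=\frac{2m-1}{r}S(r)$ for the unnormalized surface integral, while you substitute into $r^{2m}h(r)=2m\int_0^r t^{2m-1}h(t)\,dt$ and get $h'\equiv 0$ directly, which is the same computation (and your explicit bootstrap of the $C^1$ regularity of $h$ from its continuity is a point the paper glosses over).
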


\begin{proof} 
Assume $\,\phi \,$ is nearly harmonic 
at point $\,a \in D^0.\,$ It suffices 
to consider the case where $\,\phi\,$ 
is real-valued. In terms of the 
Euclidean volume elements the 
condition \eqref{E:near harm} can be 
written

\[
{\sum_{\nu}}'\mathop{\int}_{U_{[a]}(r)}
{\phi^k_{_\nu}\,d\upsilon_{[a],r}} 
= {r \over 2m}{\sum_{\nu}}'
\mathop{\int}_{dU_{[a]}(r)}{\phi^k_{_\nu}
\,d\sigma_{[a],r}}.
\]
Hence by the formula \eqref{E:ball 
and sphere},

\[
{\sum_{\nu}}'\mathop{\int}_{dU_{[a]}(r)}
{\phi^k_{_\nu}\,d\sigma_{[a],r}} = 
{\sum_{\nu}}'{d \over dr}\,
\big({r \over 2m}\mathop{\int}_{dU_{[a]}(r)}
{\phi^k_{_\nu}\,d\sigma_{[a],r}}\big).
\]
Thus 

\[
{d \over dr}{\sum_{\nu}}'
\big(\mathop{\int}_{dU_{[a]}(r)}
{\phi^k_{_\nu}\,d\sigma_{[a],r}} \big) 
= {(2m-1) \over r}{\sum_{\nu}}'
\mathop{\int}_{dU_{[a]}(r)}
{\phi^k_{_\nu}\,d\sigma_{[a],r}}.
\]
Hence one has

\[
{d \over dr}\big(\mathop{\int}_{dU_{[a]}(r)}
{\phi\,d\sigma_{[a],r}} \big) 
= {(2m-1) \over r}\mathop{\int}_{dU_{[a]}(r)}
{\phi\,d\sigma_{[a],r}}.
\]
It follows that for some $\,r^* > 0,\,$

\[
{1 \over {|\,\mathbb{S}\,|\,r^{2m-1}}} 
\> \|\mathop{\int}_{dU_{[a]}(r)}
{\phi\,d\sigma_{[a],r}}\|\, 
=\,{\rm const.},\quad \forall r\in (0,r^*).
\]
Therefore the function
$\,\|[\phi \rfloor U]_{a,r}\|,\,$ hence also 
$\,[\phi \rfloor U]_{a,r},\,$ is constant for
such $\,r.\,$ The converse assertion is an 
immediate consequence of Lemma 
\ref{L:const. aver.}.
\end{proof}

\begin{proposition}\label{P:deg. form.} 
If $\,U \subseteq X\,$ is a pseudo-ball at
$\, a \in X^0\,$ and $\,\phi \in C^0 
(\overline U)\,$ such that $\,\phi \rfloor\, 
U\cap p^{-1} (p(a)) = {\rm const.},$ then

\begin{equation}\label{E:deg. form.}
\deg \,(p\rfloor U)\,\phi (a) \, = \, \lim_{r \to 0}
\> [\phi \rfloor U]_{a,r} = \, \lim_{r\to 0}\,
\langle \phi \rfloor U \rangle _{a,r}. 
\end{equation}
\end{proposition}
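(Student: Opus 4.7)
The plan is to reduce the two integral means to their Euclidean counterparts via the covering structure of $p \rfloor U$, and then exploit the continuity of $\phi$ at $a$ through a pointwise splitting $\phi = \phi(a) + \psi$. First I would unpack the pseudo-ball hypothesis. With $a' := p(a)$ and $\rho$ the radius of $U$, the restriction $p \rfloor U : U \to U' = \mathbb{B}_{[a']}(\rho)$ is an analytic covering of degree $s_a := \deg\,(p \rfloor U) = \nu_p^{a'}(a)$ by \eqref{E:loc.deg.form.}, and condition (i) in the pseudo-ball definition forces $\overline U \cap p^{-1}(a') = \{a\}$; in particular the hypothesis $\phi \rfloor U \cap p^{-1}(p(a)) = {\rm const.}$ is automatic. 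Moreover $\upsilon_p^m = (p^{[a]})^* \upsilon^m$ and, by \eqref{E:Poinc.form b}, $\sigma_a = (p^{[a]})^* \sigma_0$, where $\sigma_0$ denotes the analogous Euclidean form on $\mathbb{C}^m$.

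Next I would apply the covering identity $\int_W (p^{[a]})^* \omega = s_a \int_{p^{[a]}(W)} \omega$, valid for saturated $W \subseteq U$, which is established via the sheet-plus-partition-of-unity decomposition $\{(U^k_\nu, \rho^k_\nu)\}$ introduced just before Lemma \ref{L:const. aver.} (the thin branch locus $T$ contributes nothing). Combined with the Euclidean normalizations $\int_{\mathbb{B}(r)} \upsilon^m = r^{2m}$ and $\int_{\mathbb{S}(r)} \sigma_0 = 1$ — the latter following from $(dd^c\log\|z\|^2)^m = 0$, the $d$-closedness of $\sigma_0$, and Stokes' theorem (or a direct polar computation) — this gives $\langle 1 \rfloor U \rangle_{a,r} = s_a$ and $[1 \rfloor U]_{a,r} = s_a$ for every $r \in (0,\rho)$.

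Now write $\phi = \phi(a) + \psi$ with $\psi(a) = 0$. The constant piece contributes $s_a \phi(a)$ to each mean by the previous step. For the remainder $\psi$, the key observation is that $\bigcap_{r > 0} \overline{U_{[a]}(r)} = \overline U \cap p^{-1}(a') = \{a\}$; since $\overline U$ is compact and $\phi$ is continuous at $a$, for every $\epsilon > 0$ there exists $r^* > 0$ such that $|\psi| < \epsilon$ on $\overline{U_{[a]}(r^*)}$. Because $\sigma_0$ restricts to a positive measure of unit mass on each $\mathbb{S}(r)$ and $\upsilon^m$ is a positive measure on each $\mathbb{B}(r)$, their pullbacks are positive measures on $dU_{[a]}(r)$ and $U_{[a]}(r)$, so for all $r < r^*$ one has $|[\psi \rfloor U]_{a,r}| \le s_a \epsilon$ and $|\langle \psi \rfloor U \rangle_{a,r}| \le s_a \epsilon$. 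Letting $\epsilon \to 0$ yields both limits equal to $s_a \phi(a) = \deg\,(p \rfloor U)\,\phi(a)$.

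The main obstacle I expect is the careful bookkeeping of the covering pushforward across the branch set $T$ — this is precisely where the local-sheet partition of unity $\{(U^k_\nu, \rho^k_\nu)\}$ becomes essential, and where one must argue that the thin set $T$ is negligible for the integrals under consideration. The confirmation that $\sigma_0$ restricts to a positive probability measure on $\mathbb{S}(r)$ is a standard but necessary auxiliary computation. Once these reductions are in place, the continuity-based estimate of Paragraph 3 is routine.
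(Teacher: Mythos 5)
Your proof is correct and follows essentially the same route as the paper: both compute $\langle 1\rfloor U\rangle_{a,r}=[1\rfloor U]_{a,r}=\deg\,(p\rfloor U)$ via the covering structure (the paper cites \eqref{E:rel.spher. and solid MV} and Proposition 5.2.2 of \cite{cT79} where you redo the pushforward and Euclidean normalizations directly), then split off $\phi-\phi(a)$ and estimate it by continuity on the shrinking sets $\overline{U_{[a]}(r)}$, which collapse onto the fiber over $p(a)$ by compactness. Your side remark that the constancy hypothesis is automatic for a pseudo-ball as defined is consistent with condition (i) of that definition; the paper keeps the hypothesis because its proof reduces to $U=X$ and allows the fiber $p^{-1}(a')$ to contain several points.
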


\begin{proof} 
Without loss of generality assume $\,U = X,\,$ 
and let $\,\tau: = \|p^{[a]}\|^2 : X \to [0, 
\infty).\,$ The identity \eqref{E:rel.spher. and 
solid MV} implies that $\,[1]_{a,r} 
= \,\langle 1\rangle_{a,r},\;\forall r > 0.\,$ 
Also, it follows from \cite{cT79}, Proposition 
5.2.2 and the sheet number formula 
\eqref{E:sheet no.} that

\begin{equation}\label{E:deg.form.a}
\langle 1 \rangle_{a,r} = \,\deg \,(p)
\,\langle 1 \rfloor \Omega
\rangle_{a',r} = \,\deg \,(p),
\end{equation}
where the integral $\,\langle 1 \rfloor \Omega 
\rangle_{a',r} = 1\,$ is defined in terms of the 
Euclidean volume element $\,\upsilon^m\,$
on $\,\mathbb{C}^m.\,$ Set $\,\tilde \phi :\> = \phi -
\phi(a).\,$ For each $\,\varepsilon > 0,\,$ let
$\,W_{\varepsilon}^\alpha\,$ be a neighborhood of
$\,z_\alpha \in \tau^{-1}(0)\,$ such that
$\,\|\tilde \phi (z)\| < \varepsilon, \;
\forall z \in W_{\varepsilon}^\alpha.\,$ Take 
$\, r_0 > 0\,$ and choose an open covering of 
$\,p^{-1}(a') \cap X_{[a]} [r_0]\,$ by the open 
sets $\,W_{\varepsilon}^\alpha$. Then there exists
a constant $\,\delta > 0\,$ such that $\,\tau^{-1}
([-\delta, \delta]) \cap X_{[a]} [r_0] \subseteq
W_\varepsilon := \cup_{\alpha} \,
W_{\varepsilon}^\alpha\,$ (\cite{cT79}, (1.1.5)). 
This implies that $\,\sup_{X_{[a]}[\delta]} 
\|\tilde \phi\|\le \varepsilon.\,$ \ Hence

\begin{equation}\label{E:deg.est.}
\| \langle \tilde \phi \rangle_{a,r}\| 
\le \sup_{X_{[a]}[\delta]} \|\tilde \phi\| \,
\langle 1 \rangle_{a,r} \,\le \,\varepsilon 
\, \deg \,(p), \quad \forall r \in (0, \delta].
\end{equation}
Therefore it follows from the relations 
\eqref{E:deg.form.a} and \eqref{E:deg.est.} 
that

\[
 \lim_{r \to 0} \, \langle \phi \rfloor U 
\rangle_{a,r} = \,\phi (a) \,\deg \,(p). 
\]
The remaining assertion on the spherical mean-value 
is similarly proved.
\end{proof}

\begin{proposition}\label{P:max.prin.} 
{\rm (}Maximum principle{\rm )} Let $\,D\,$ be a domain 
in $\,X.\,$ Assume: {\rm i)} either $\, D \subseteq X^0
\,$ or $\,D\,$ is irreducible; {\rm ii)} $\phi : D 
\to [- \infty, \infty)\,$ is upper-semicontinous and 
bounded above; {\rm iii)} $\forall a \in D^0,\,$ there 
exists a neighborhood $\,U \subseteq D\,$ of $\,a\,$ 
such that

\begin{equation}\label{E:sub-mean-value}
\nu_p(a) \, \phi (a) \le \langle
\phi \,\rfloor U \rangle_{a,r} \quad \forall r \in (0,r_0).
\end{equation}
Then $\,\phi \,$ satisfies the maximum
principle on $\,D${\rm :} if for some $\,z_0 \in D^0$,
$\phi (z_0) = K : \, = \sup\, \{ \phi (z)\,| \,z
\in D\} \not \equiv -\infty,\,$ then $\,\phi =$ constant.
\end{proposition}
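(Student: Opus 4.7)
The plan is a standard connectedness argument. I set $E := \{z \in D^0 : \phi(z) = K\}$, which is nonempty by assumption, and aim to show $E$ is both relatively closed and relatively open in $D^0$. Under hypothesis i), $D^0$ is connected: if $D \subseteq X^0$ then $D^0 = D$ is a domain; if $D$ is irreducible, then $D^0 = D \setminus \Sigma_p$ is connected because removing a thin analytic subset from an irreducible complex space preserves connectedness. Granting $E = D^0$, the fact that $D^0$ is dense in $D$ (since $\Sigma_p$ is thin), combined with upper-semicontinuity and $\phi \le K$, gives $\phi \equiv K$ on $D$: any $z \in D$ is a limit of $z_n \in D^0 = E$, and $K = \limsup \phi(z_n) \le \phi(z) \le K$.

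Closedness of $E$ in $D^0$ follows immediately from the same upper-semicontinuity argument. The work lies in openness. Given $a \in E$, take the neighborhood $U$ provided by hypothesis iii); by shrinking I may assume $U$ is a pseudo-ball at $a$ of radius $r_0 > 0$. Then \eqref{E:loc.deg.form.} together with the computation used in \eqref{E:deg.form.a} yields $\langle 1 \rfloor U\rangle_{a,r} = \deg(p\rfloor U) = \nu_p(a)$, so that multiplying this by $K = \phi(a)$ and subtracting the hypothesis \eqref{E:sub-mean-value} gives
\[
\int_{U_{[a]}(r)}(K - \phi)\,\upsilon_p^m \,\le\, 0, \qquad r \in (0,r_0).
\]
Because $K - \phi \ge 0$ on $D$ and $\upsilon_p^m$ restricts to a positive smooth volume form on the dense open subset $U \cap X^* \cap X^0$, this integral is also nonnegative, hence vanishes, so $\phi = K$ holds $\upsilon_p^m$-almost everywhere on $U_{[a]}(r)$.

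The main obstacle is the final regularity step: upgrading the $\upsilon_p^m$-a.e. equality to the pointwise statement $\phi \equiv K$ on $U_{[a]}(r_0)$, needed to conclude that a full neighborhood of $a$ in $D^0$ lies in $E$. The form $\upsilon_p^m$ may degenerate on the ramification locus $X \setminus X^*$, but since that locus is thin analytic, the set $\{\phi = K\}$ is dense in $U_{[a]}(r_0)$. Upper-semicontinuity of $\phi$ together with $\phi \le K$ then promotes this density to the pointwise identity $\phi \equiv K$ on $U_{[a]}(r_0)$, by the same limit-sup argument used for closedness. This yields openness of $E$ in $D^0$, closing the clopen argument and delivering the maximum principle.
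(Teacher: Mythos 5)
Your proof is correct and follows essentially the same route as the paper's: both rest on the normalization $\langle 1 \rfloor U\rangle_{a,r} = \deg(p\rfloor U) = \nu_p(a)$, the sub-mean-value hypothesis, the positivity of $\upsilon_p^m$ off a thin set, and the connectedness of $D^0$. The only cosmetic difference is that the paper argues by contradiction (a point with $\phi < K$ in the pseudo-ball, being interior to the open set $\{\phi<K\}$, forces the strict inequality $\phi(a) < K$), whereas you derive $\phi = K$ $\upsilon_p^m$-a.e.\ and then upgrade to a pointwise identity via upper-semicontinuity; both steps are sound.
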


\begin{proof}
   Let $\,M = \{z \in D\,| \,\phi (z)
< K \}.\,$ For any $\,a_0 \in D^0 \backslash M,\,$ 
choose a neighborhood $\,U \subseteq D^0\,$
such that the inequality \eqref{E:sub-mean-value} 
holds. Without loss of generality, assume that $\,U\,$ 
is a pseudo-ball at $\,a_0\,$ of radius $\,\rho.\,$ 
Suppose $\,M \cap U_{[a_0]}(\rho) \not = \emptyset.\,$ 
Since $\,\phi (z) < K\,$ for all $\,z\,$ in a 
neighborhood of each $\,z^*\in M \cap
U_{[a_0]}(\rho)$, it follows from \cite{cT79},
Proposition 5.2.2, and \eqref{E:sheet no.} that

\[
  \begin{split} 
\rho^{2m} \,\nu_p(a_0)\,\phi (a_0) & \le 
\mathop{\int}_{U_{[a_0]}(\rho)}{\phi
\,\upsilon^m_p} \\
& < \mathop{\int}_{U_{[a_0]}(\rho)}{K
\upsilon^m_p} = \,\rho^{2m} \,\deg \,(p\rfloor U)\,K.
  \end{split}
\]
Hence by the relation \eqref{E:loc.deg.form.},
this implies that $\,\phi (a_0) < K,\,$ a
contradiction. Therefore $\,M \cap U_{[a_0]}(r) = 
\emptyset.\,$ Thus the set $\,D^0 \backslash M\,$ 
is open and non-void. It follows from the connectedness 
of $\,D^0\,$ that $\,M \cap D^0 = \emptyset.\,$ 
Consequently $\,\phi (z) = K\,$ in $\,D^0$, hence 
also in $\,D$.
\end{proof}

\section{Semi-harmonicity}\label{S:Semi-harm.}

     For later use set $\,C^\lambda 
(\overline D) : = A^{0,\lambda} 
(\overline D),\,$ $\,C^{1,1} 
(\overline D) : = \{\phi \in C^1 
(\overline D) \,|\,\phi_{_{\tilde x_j}}
\,{\rm and} \;\phi_{_{\tilde y_j}} 
\in C^\lambda (\overline D),\,1\le 
j\le m\},\,$ and $\,C^{\lambda,(c)}
(\overline D)\,:= \,\{\eta \in C^1 
(\overline D) \,|\, \eta\,\rfloor 
\partial D = {\rm const.}\}.\,$ 
Denote by $\,C^\mu_0 (D)\,$ the set 
of compactly supported $C^\mu$-functions. 
The sets $\, C^\lambda (\partial D)
,\,$ and $\,C^{1,1} (D),\; C^{1,1}_0 
(D)\,$ are similarly defined. Let 
$\,\rho \in A^{2m,0}(D).\,$ A function 
$\,f\,$ on $\,X\,$ is said to be {\it 
locally integrable} ($f\in L^1_{{\rm loc}}
(X)$) provided so is every $2m$-form 
$\,f\chi\,$ with $\,\chi\in A^{2m,0}
(X).\,$ Similarly define $\,
L^2_{{\rm loc}}(X)\,$ (with $\,|g|^2
\chi\,$ in place of $\,g\chi$). A 
{\it weak solution} of the semi-Poisson 
equation

\begin{equation}\label{E:Poisson equ.}
dd^c \,(\phi \,{\upsilon}_p^{m-1})
= \,\rho 
\end{equation}
is a locally integrable function $\,\phi: D \to 
\mathbb{C}\,$ such that $\forall \> C^2$-function 
$\,u : D \to [0, \infty)\,$ with compact support 
in $\,D^*$, 

\[
\mathop{\int}_{D}{\phi \,dd^c u \wedge
\upsilon_p^{m-1}} = \mathop{\int}_{D}{\rho\, u \,
\upsilon_p^m }. 
\]
A ({\it strong}) {\it solution} of the equation 
\eqref{E:Poisson equ.} is a $C^2$-function $\,\phi: D 
\backslash A \to \mathbb{C}\,$ for some thin analytic 
subset $\,A\,$ of $\,D\,$ such that $\,\phi \,$ 
satisfies the equation \eqref{E:Poisson equ.} (pointwise)  
in $\,D \backslash \,A.\,$ The Stokes theorem shows that 
every strong solution in $\,C^{1,1} (\overline D)\,$ of 
the equation \eqref{E:Poisson equ.} is a weak solution.
It will be shown that on a Riemann domain, if $\,\partial 
D$ is reasonably smooth, a bounded weak solution in $\,
C^0(\overline D)\,$ of the semi-Poisson equation depends 
continuously on its boundary values on $\,\partial D.\,$

      A locally integrable function $\,\phi: D 
\to \mathbb{C}\,$ is called: (1) {\it weakly harmonic 
in} $\,D\,$ (with respect to $\,p$) iff $\,\phi\,$ is 
a weak solution of the semi-Laplace equation

\begin{equation}\label{E:semi-Laplace equ.}
dd^c \,(\phi \, {\upsilon}_p^{m-1}) = 0  
\end{equation}
in $\,D;\,$ (2) {\it semi-harmonic in} $\,D$ 
(with respect to $\,p$) iff there exists a thin 
analytic subset $\,A\,$ of $\,D\,$ such that 
$\,\phi\,$ is a $C^2$-solution of the semi-Laplace 
equation \eqref{E:semi-Laplace equ.} in $\,D 
\backslash \,A.\,$ Also, $\,\phi: D \to 
\mathbb{C}\,$ is called {\it semi-harmonic at} 
$\,a \in D\,$ iff $\,\phi\,$ is semi-harmonic in 
a neighborhood of $\,a.\,$ The real and imaginary 
parts of every weakly holomorphic or pluri-harmonic 
function in $\,D\,$ are semi-harmonic. Locally 
every pure $m$-dimensional complex space $\,X\,$ is 
an analytic covering of a domain in $\,\mathbb{C}^m.\,$ 
Thus by means of the identity 
\eqref{E:semi-Laplace operator}, the 
{\it semi-harmonicity} of a function $\,\phi\,$ on a
domain $\, D \subseteq X\,$ is intrinsically 
defined by the requirement \eqref{E:semi-Laplace equ.} 
in terms of the local covering maps of $\,X.\,$ Let $\,
{\mathfrak H}(D),\,$ resp. ${\mathfrak H}_w(D),
\,$ denote the set of all semi-harmonic, resp. weakly 
harmonic, functions in $\,D.\,$

\begin{theorem}\label{T:charac. semi-harm.}
Let $\,(X, p)\,$ be a Riemann domain and 
$\,\phi\in C^0 (X)\,$. The following 
conditions are equivalent: 

   {\rm (1)} $\,\phi \,$ is locally nearly harmonic 
in $\,X$.

  {\rm (2)} $\, \phi \,$ has the local spherical
mean-value property in $\,X$.

   {\rm (3)} $\, \phi \,$ has the local solid
mean-value property in $\,X$.
\end{theorem}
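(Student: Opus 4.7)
The plan is to assemble the three previous lemmas of Section \ref{S:Int.aver.} into a short cycle of implications, using Proposition \ref{P:deg. form.} to identify the value of the constant in Lemma \ref{L:near harm.} as $\nu_p(a)\phi(a)$. Since $(X,p)$ is a Riemann domain one has $\Sigma=\emptyset$ and $X=X^0$, so every point $a\in X$ admits a pseudo-ball $U$ on which $p^{-1}(p(a))\cap U=\{a\}$; all three conditions are then pointwise requirements at each $a$, and by shrinking $U$ we may assume $\overline{U}$ is compact so that $\phi\in C^0(\overline{U})$.

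I would first dispose of (2) $\Leftrightarrow$ (3). Suppose (2) holds, so that for some pseudo-ball $U$ at $a$ of radius $r_0>0$, $[\phi\rfloor U]_{a,r}=\nu_p(a)\phi(a)$ for all $r\in(0,r_0)$. Because this value is a constant in $r$, Lemma \ref{L:const. aver.} gives $\langle \phi\rfloor U\rangle_{a,r}=\nu_p(a)\phi(a)$ on the same interval, which is (3). The reverse implication is symmetric.

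Next, for (1) $\Rightarrow$ (2), I would invoke Lemma \ref{L:near harm.}: near-harmonicity at $a$ yields a pseudo-ball $U$ and a radius $r^{*}>0$ on which $[\phi\rfloor U]_{a,r}$ is constant. The identification of this constant comes from Proposition \ref{P:deg. form.}, whose hypothesis that $\phi$ be constant on $U\cap p^{-1}(p(a))$ is automatic from the pseudo-ball property recalled above; thus
\[
\lim_{r\to 0}\,[\phi\rfloor U]_{a,r}=\deg(p\rfloor U)\,\phi(a)=\nu_p(a)\phi(a),
\]
where the last equality uses the local degree formula \eqref{E:loc.deg.form.}. Consequently the constant value is $\nu_p(a)\phi(a)$, proving (2). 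For (2) $\Rightarrow$ (1), the already established (2) $\Rightarrow$ (3) gives that both $[\phi\rfloor U]_{a,r}$ and $\langle \phi\rfloor U\rangle_{a,r}$ equal $\nu_p(a)\phi(a)$, so they agree on $(0,r_0)$, which is precisely the defining equation \eqref{E:near harm} of near-harmonicity at $a$.

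The argument is essentially a bookkeeping of the prior lemmas, and I do not anticipate a serious obstacle. The only subtle point is the use of Proposition \ref{P:deg. form.} to upgrade the a priori unspecified constant of Lemma \ref{L:near harm.} to $\nu_p(a)\phi(a)$; this is what bridges the gap between the weaker property "the mean is constant" and the stronger property "the mean equals $\nu_p(a)\phi(a)$" demanded by (2) and (3), and is the reason the Riemann-domain hypothesis (via pseudo-balls with singleton fibers over $p(a)$) intervenes.
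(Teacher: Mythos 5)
Your proposal is correct and follows essentially the same route as the paper, which likewise cites Lemma \ref{L:near harm.} to convert near-harmonicity into constancy of the spherical mean, Lemma \ref{L:const. aver.} to pass between the spherical and solid means, and Proposition \ref{P:deg. form.} together with \eqref{E:loc.deg.form.} to identify the constant as $\nu_p(a)\,\phi(a)$. Your explicit remark that the hypothesis of Proposition \ref{P:deg. form.} is automatic because $U\cap p^{-1}(p(a))=\{a\}$ for a pseudo-ball in a Riemann domain is exactly the point the paper leaves implicit.
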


\begin{proof} 
By Lemma \ref{L:near harm.}, the 
near-harmonicity at $\,a \in X\,$ 
of $\,\phi \,$ is equivalent to
the constancy of its local spherical 
mean-value at $\,a$. Hence it follows 
then from Lemma \ref{L:const. aver.}, 
Proposition \ref{P:deg. form.} and 
the relation \eqref{E:loc.deg.form.} 
that at each point of $\,X,\,$ the 
above three conditions on $\,\phi\,$ 
are equivalent.
\end{proof}

     An element $\phi\in 
L^1_{{\rm loc}}(X)$ is said to 
be {\it locally forwardly 
square-integrable} ($\phi\in 
L^2_{{\rm loc}}[X]$) if there 
exists a pseudoball $\,U\Subset 
X\,$ at each point of $\,X\,$ 
such that the following integral
exists: $\,\int_{U'\backslash 
\Delta'}{\|(\sum_{\nu}'
\hat\phi^{j}_{\nu})(z')\|^2
\,dv(z')} < \infty,\,$ where the 
(measurable) function 
$\,\sum_{\nu}'\hat\phi^{j}_{\nu}  
: U'\backslash\Delta'\to {\mathbb C}
\,$ is defined by: $\,z'\mapsto 
\lim_{n\to \infty}\sum_{l=1}^{N^j_n}
\hat\phi^j_{n_l}(z'),\,$ the 
summation being taken over a 
covering of $\,K_n\cap V^j\,$ 
by the open sets $\,U^j_{n_l},
\,l = 1,\cdots, N^j_n,\,$ the 
$\,\{K_n\}\,$ being an exhausting 
sequence of compact subsets of 
$\,U^*.\,$ Note that it can be 
shown (by using \cite[Theorem 5.2.2]
{cT79}) that the function 
$\,\sum_{\nu}'\hat\phi^{j}_{\nu}
\,$ is integrable on $\,U'
\backslash\Delta'.\,$ The 
following characterization of 
semi-harmonicity also gives a 
criterion for the removability 
(in a weak sense) of analytic 
singularities:

\begin{theorem}\label{T:weak harm. and semi-harm.}
Let $\,(X, p)\,$ be a Riemann domain. 
The following assertions "{\rm (2)} 
$\Rightarrow$ {\rm (3)}  $\Rightarrow$ 
{\rm (1)}" hold; moreover, the 
implication "{\rm (1)} $\Rightarrow$ 
{\rm (2)}" is valid if $\,\phi\in 
L^2_{{\rm loc}}[X]${\rm :} 

   {\rm (1)} $\,\phi\in {\mathfrak H}_w
(X).\,$  

   {\rm (2)} $\phi\,$ is locally 
integrable in $\,X\,$ and defines 
a current $\,[\phi]\,$ induced
by a function $\,\tilde\phi\in 
C^{\beta\cap {\mathfrak m}}(X)
\cap C^0(X^*)\,$ satisfying the 
local solid mean-value property 
in the domain of continuity of 
$\,\tilde\phi.\,$

   {\rm (3)} $\,\phi\in 
{\mathfrak H}(X).\,$
\end{theorem}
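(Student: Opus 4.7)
The plan is to prove the three implications in the order $(3)\Rightarrow (1)$, $(2)\Rightarrow (3)$, $(1)\Rightarrow (2)$, working on the simple-point locus $X^*\subseteq X$ (whose complement is a thin analytic subset of $X$) and leveraging Theorem~\ref{T:charac. semi-harm.} together with the identity \eqref{E:semi-Laplace operator}.

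For $(3)\Rightarrow (1)$ I would fix a test function $u\in C^2_0(X^*)$, choose a shrinking family of open neighborhoods with smooth boundary of the thin analytic exceptional set $A$ on which $\phi$ is singular, apply Stokes' theorem of \cite{cT79}, and use the $C^2$-bounds of $\phi$ away from $A$ together with the measure-theoretic thinness of $A$ to push the boundary contributions to zero; the interior term vanishes by assumption on $X^*\setminus A$. For $(2)\Rightarrow (3)$, I would observe that $X^*$ is contained in the domain of continuity of $\tilde\phi$ and that $p|_{X^*}$ is a local biholomorphism, so the local solid mean-value property with $\nu_p(a)=1$ transcribes to the classical Euclidean solid mean-value equation for the local direct image $\hat{\tilde\phi}$. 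Koebe's converse to the mean-value theorem then yields $\tilde\phi\in C^\infty(X^*)$ with $\Delta\hat{\tilde\phi}=0$, and \eqref{E:semi-Laplace operator} shows $dd^c(\tilde\phi\,\upsilon_p^{m-1})=0$ on $X^*$; taking the exceptional set $A:= X\setminus X^*$ (thin analytic) establishes $\tilde\phi\in\mathfrak{H}(X)$, hence $\phi\in\mathfrak{H}(X)$ since $[\tilde\phi]=[\phi]$.

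For $(1)\Rightarrow (2)$ under the forward $L^2$ hypothesis, the strategy is two-step. First, on $X^*$ the weak-harmonicity of $\phi$ pulls down under the local biholomorphism $p$ to classical weak-harmonicity on a Euclidean ball, so the classical Weyl lemma produces a smooth harmonic representative $\tilde\phi\in C^\infty(X^*)$ of $\phi|_{X^*}$. Second, I extend $\tilde\phi$ across the branch/singular locus $X\setminus X^*$: for a pseudo-ball $U$ at $a\in X\setminus X^*$, the $L^2$-forward hypothesis places each branchwise direct image $\Phi^j := \sum_{\nu}'\hat\phi^j_{\nu}$ in $L^2_{\rm loc}(U'\setminus\Delta')$. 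I would show that $\Phi^j$ is weakly harmonic on $U'\setminus\Delta'$ by testing against an arbitrary compactly supported $u'\in C^2_0(U'\setminus\Delta')$ and pulling back under $p|_{V^j\cap U}$ to a test function $p^*u'$ that is legitimately compactly supported in $X^*$; then the classical removability of analytic singularities for $L^2$ weakly harmonic functions extends $\Phi^j$ to a harmonic function on all of $U'$. The extension $\tilde\phi$ to $X\setminus X^*$ is then read off the $\Phi^j$, and inherits the required bounded measurability and the local solid mean-value property from the classical mean-value property of the $\Phi^j$, via the sheet-number formula \eqref{E:sheet no.}.

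The main obstacle is the extension step in $(1)\Rightarrow (2)$: the weak-harmonicity of $\phi$ only permits test functions compactly supported in $X^*$, so pulling back an arbitrary $u'$ from $U'\setminus\Delta'$ does not automatically yield a legitimate test function when $U$ has multiple branches and non-trivial ramification along $\Delta'$. The $L^2_{\rm loc}[X]$ hypothesis is exactly what allows a cut-off and dominated-convergence argument to approximate by legitimate tests and to establish the weak harmonicity of $\Phi^j$ on $U'\setminus\Delta'$; this is precisely the step where the original proof in the quoted 2007 paper breaks down without extra integrability, as noted in the erratum footnote.
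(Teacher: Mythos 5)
Your sketch of (2) $\Rightarrow$ (3) is in substance the paper's argument (the paper proves the Koebe-type converse by comparing the induced function $\hat\phi$ with the harmonic extension of its boundary values and applying the maximum principle, Proposition \ref{P:max.prin.}, rather than citing Koebe), and your reduction of (1) $\Rightarrow$ (2) on $X^*$ to the classical Weyl lemma is likewise consistent with what the paper does there. The decisive step, however, is the passage across $X\setminus X^*$, and your argument for it has a genuine gap: the claimed ``classical removability of analytic singularities for $L^2$ weakly harmonic functions'' is false. The function $\log|z_1|$ on $\mathbb{B}\setminus\{z_1=0\}\subset\mathbb{C}^m$ (for $m=1$, $\log|z|$ on the punctured disc) is harmonic off the analytic set and square-integrable on the whole ball, yet admits no harmonic extension; its distributional Laplacian is a nonzero measure carried by the exceptional set. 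Real codimension two is exactly critical, and $L^2$ does not remove it --- one needs local boundedness (zero Newtonian capacity) or $W^{1,2}$ control. Consequently, knowing that $\Phi^j={\sum_\nu}'\hat\phi^j_\nu$ is harmonic on $U'\setminus\Delta'$ and lies in $L^2(U')$ does not yield a harmonic extension across $\Delta'$, and the cut-off/dominated-convergence approximation you invoke fails for the same reason: the error term $\int\Phi^j\,u'\,\triangle\chi_\delta\,dv$ is borderline and does not tend to zero in the logarithmic example. The paper's proof is structured precisely to avoid this step: following G\aa rding, it represents $\phi$ on $G_1$ by the potential $h(\xi)=c\int_{\overline{G_2}\setminus G_1}\phi\,dd^c(\beta(z)\gamma(\xi,z))\wedge\upsilon_p^{m-1}$, whose kernel is smooth and supported away from $\xi$, shows $h$ is bounded and semi-harmonic on all of $G_1$, and verifies $h=\phi$ a.e.\ by testing against functions manufactured from $\gamma$; the hypothesis $\phi\in L^2_{\rm loc}[X]$ enters only through H\"older's inequality (to bound $h$) and Fubini (to justify the interchanges of integration), not through a removable-singularity theorem. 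Without an argument of this type your construction of $\tilde\phi$ does not go through.

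A secondary point: for (3) $\Rightarrow$ (1) you excise a shrinking neighborhood $N_\delta$ of the exceptional set $A$ and discard the boundary terms using ``$C^2$-bounds away from $A$'' and the thinness of $A$. A semi-harmonic function is only $C^2$ off $A$, with no uniform control of $d^c\phi$ near $A$, so $\int_{\partial N_\delta}u\,d^c\phi\wedge\upsilon_p^{m-1}$ need not vanish (again $\log|z_1|$ is the test case: $\|d^c\phi\|\sim\delta^{-1}$ on a boundary of measure $\sim\delta$), and even $\int_{\partial N_\delta}\phi\,d^cu\wedge\upsilon_p^{m-1}$ requires a subsequence selection from mere local integrability. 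The paper instead mollifies $\hat\phi$ branchwise, observes that the mollifications are harmonic, and passes to the limit by $L^1$-convergence; you should adopt that route, or else impose growth conditions on $\nabla\phi$ near $A$ for your excision argument to close.
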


\begin{proof} 
    To prove the assertion "(1) 
$\Rightarrow$ (2)", let $\,U 
\subseteq X\,$ be a pseudo-ball 
of radius $\,r_0\,$ at a point 
$\,a \in X.\,$ Choose a non-negative 
function $\,\alpha \in C^\infty_0 
(\mathbb{R}^{2m})\,$ with support 
contained in $\,\mathbb{B}[1]\,$ 
such that $\,\int_{\mathbb{R}^{2m}} 
\alpha \,d\upsilon = 1.\,$ Set 
$\,\alpha_\varepsilon (x) :\> =
\varepsilon^{-2m}\, 
\alpha({x \over \varepsilon}),\; 
\alpha_\varepsilon^{(0)}(x) = 
\alpha_\varepsilon (- x),\; \forall 
\varepsilon > 0.\,$ Define $\,\hat
\phi_{j,\varepsilon} = 
\alpha_\varepsilon * (\phi| V^j)\,$ 
for each branch $\,V^j,\; 1 \le j
\le {\mathfrak s}_a,\,$ of $\,U,\,$ 
by setting

\[
\hat\phi_{j,\varepsilon} (x) := \int_{V^j}
{\alpha_\varepsilon (x-y')\,\phi(y)
\,d\tilde\upsilon}
\]
on $\,W_\varepsilon = \{ x \in U'\,|\, 
{\rm dist} (x, \partial U') > 
\varepsilon\},\,d\tilde\upsilon 
:= p^*({d\upsilon})\,$ being the 
pullback of the Euclidean volume 
element on $\,{\mathbb C}^m.\,$
Then $\,\hat \phi_{j,\varepsilon}
\in C^\infty(W_\varepsilon).\,$ 
Let $\,U''\subset\subset W\subset 
\subset p(V^j)\,$ be open subsets 
of $\,U'.\,$ Set $\,\varepsilon_1 
= {1 \over 2} {\rm dist} 
(\overline {U''}, \partial W).\,$ 
Then $\forall w\in C^\infty_0 (Q),
\,Q = U^j_\nu\,$ being a local 
sheet of $\,p^{-1}(U'')\cap 
V^j$ (at a point $\,z$), and 
denoting by $\,\hat w\,$ 
(respectively, $\,\hat\phi^{j,\nu}$) 
the function on $\,Q'\,$ induced 
by $\,w$ (respectively, $\,\phi|Q$),
one has $\,{\rm Spt}
(\alpha_\varepsilon * \hat w) 
\subseteq W,\;\forall\varepsilon
\in (0,\varepsilon_1)\,$ and

\begin{equation}\label{E:weak Laplacian.a}
\begin{split} 
\int_{Q'}{\hat\phi_{j,\varepsilon}\,
dd^c \hat w\wedge \upsilon_{[z']}^{m-1}}
& = \,{\rm const.}\,{(\hat\phi^{j,\nu},
\,\alpha_\varepsilon^{(0)} * 
\triangle \hat w)_{_{Q'}}}\\ 
& = \,\,{\rm const.}\,{(\hat\phi^{j,\nu},
\,\triangle (\alpha_\varepsilon^{(0)} * 
\hat w))_{_{Q'}}}.
\end{split}
\end{equation}
Therefore

\[
\int_{Q'}{\hat \phi_{j,\varepsilon}\,dd^c
\hat w \wedge \upsilon_{[z']}^{m-1}} \,= \,
 \,\int_{U''}{\hat\phi^{j,\nu} \,
 dd^c (\alpha_\varepsilon^{(0)} * \hat w)\, 
\upsilon_{[z']}^{m-1}}.
\]
Thus the weak harmonicity of 
$\,\hat\phi^{j,\nu}\,$ implies 
that each $\,\hat
\phi_{j,\varepsilon}\,$ is 
weakly harmonic in $\,V^j\,$ for 
sufficiently small $\,\varepsilon 
> 0.\,$ Hence for any $\,\hat u\in
C^2_0(U'')\,$ with $\,\hat u\ge 0$ 
and $\,\hat u = 1\,$ in a neighborhood 
of a point $\,z'\in U''_\varepsilon
\backslash \Delta',\,$ the relation

\begin{equation}\label{E:weak Laplacian.b}
\int_{U''}{\hat \phi_{j,\varepsilon}\,dd^c
\hat u \wedge \upsilon_{[z']}^{m-1}} 
= \int_{U''}{\hat u \,dd^c
\hat \phi_{j,\varepsilon} \wedge
\upsilon_{[z']}^{m-1}} = 0
\end{equation}
(for sufficiently small 
$\,\varepsilon > 0$) implies 
that the function $\,\hat 
\phi_{j,\varepsilon}\,$ is 
harmonic in $\,U''.\,$ Observe 
that the Dirichlet product

\[
[\hat \phi_{j,\varepsilon},
\|z' -a'\|^2]_{a',r} = \mathop{\int}_{U'_{a'}(r)}
{(r^2 - \|z' - a'\|^2)\,dd^c \hat\phi_{j,\varepsilon} 
\wedge \upsilon_{[a']}^{m-1}}  
\]
for small $\,r > 0.\,$ Hence the 
identity \eqref{E:rel.spher. and 
solid MV} implies that $\,\hat 
\phi_{j,\varepsilon}\,$ is nearly 
harmonic at $\,a'.\,$ Thus it follows 
from Lemma \ref{L:near harm.} and 
the relation \eqref{E:deg. form.} 
that there exists $\,r_{a'}\in 
(0,r_0)\,$ such that

\[
\hat\phi_{j,\varepsilon} (a') = \langle
\hat\phi_{j,\varepsilon}\,\rfloor
{\mathbb{B}_{[a']}}(r) \rangle_{a',r}, 
\quad\forall r \in (0,r_{a'}). 
\]
Here the $\,r_{a'}\,$ may be 
chosen to be independent of 
$\,\varepsilon\,$ (for small 
$\,\varepsilon\,$), since the 
open set $\,W_\varepsilon\,$ 
increases with decreasing 
$\,\varepsilon.\,$ Assume at 
first that $\,\phi\in C^0 (X).
\,$ Letting $\,\varepsilon 
\to 0\,$ this relation yields

\[
\lim_{\varepsilon\to 0}\,
\langle\hat\phi_{j,\varepsilon}\,\rfloor
{\mathbb{B}_{[a']}}(r) \rangle_{a',r}
= \lim_{\varepsilon\to 0} 
\hat\phi_{j,\varepsilon} (a') = s_j\phi(a), 
\quad \forall r \in (0,r_{a'}),
\]
the $\,s_j\,$ being the sheet number 
of $\,p|V^j$ (where the last equality 
follows from \cite[Theorem 5.2.2]
{cT79}). Suppose that $\,f\in 
C^{\infty}_0(U^{j}_{\nu})\,$ and 
$\,r\in (0,r_{a'}).\,$ There exists 
an $\,\varepsilon_r > 0\,$ such that 
$\,U'_{[y']}(\varepsilon)\subseteq 
U'_{[a']}(r)\,$ for all $\,y\in 
V^j_{[a]}(r)\cap {\rm Spt}(f).\,$ 
Then for all $\,\varepsilon \in 
(0,\varepsilon_r),\,$ 

\[	
  \begin{split} 
\langle f\,\rfloor {\bf 1}_{V^j}\rangle_{a,r} 
& = {1 \over r^{2m}} \mathop{\int}_{V^j_{[a]}(r)\cap U^j_\nu}
{f(y)\cdot (\int_{z'\in U'\backslash\Delta'}
{\alpha_{\varepsilon}^{(0)}(y'-z')\,d\upsilon(z')})
\,\upsilon_{p}^{m}}(y)\\
& = {1 \over r^{2m}}  \mathop{\int}_{(U^j_\nu)_{[a]}(r)}
{(\int_{V^j_{[a]}(r)\cap U^j_\nu}
{(\alpha_{\varepsilon}^0 \circ p^{[z]})(y)
f\,(y)\,d\tilde\upsilon}(y))\,\upsilon^m_p(z)}\\
& = \langle f_{_{\{j,\varepsilon\}}}
\,\rfloor {\bf 1}_{{\mathbb B}_{[a']}(r)}\rangle_{a',r}.
\end{split}
\]
Denoting by $\,
\psi^j_{_{\nu,\varepsilon}}\,$ 
the convolution $\,\alpha_\varepsilon 
* (\rho^{j}_{\nu}\phi),\,$ one has $\,
\hat\phi_{j,\varepsilon} = {\sum_{\nu}}'
\psi^j_{_{\nu,\varepsilon}},\,$ so 
that

\[  
\begin{split} 	
\langle\phi\,\lfloor {\bf 1}_{V^j}\rangle_{a,r} 
& = \sum_{\nu}\langle \psi^j_{_{\nu,\varepsilon}}\,
\rfloor {\mathbb B}_{[a']}(r)\rangle_{a',r}
= \langle \hat\phi_{j,\varepsilon}\,
\rfloor {\mathbb B}_{[a']}(r)\rangle_{a',r}. 
\end{split}
\]
Thus

\[
\langle \phi\,\rfloor U\rangle_{a,r} 
= \sum_{j}\langle\phi\,\lfloor 
{\bf 1}_{V^j}\rangle_{a,r} = 
\sum_{j}s_j \phi(a) = \nu_p(a)\,\phi(a), 
\quad \forall r \in (0,r_{a'}).
\]
Hence the local solid mean-value 
property holds for $\,\phi.\,$

     The general case will be proved 
by adapting the proof of G\aa rding 
\cite{lG50}. Let $\,{\gamma}(\xi, z),\;
\xi, z\in U\,$ with $\,\xi' \not = z',
\,$ be the Newtonian potential

\begin{equation}\label{E:Newt. potential}
{\gamma}(\xi, z) :\> = 
\begin{cases}
 {1 \over 2\pi} \log \|\xi' - z'\|^2, &
\text{if $ m = 1$,} \\
  \,{- 1 \over (2m -2)|\mathbb{S}|\|\xi'-z'\|^{2m-2}}, 
& \text{if $m > 1$.} 
\end{cases}
\end{equation}
Suppose that $\,\phi\in L^2_{{\rm loc}}
[X]\,$ and let $\,p : U = U_{[a]}
(r_0)\to U'= {\mathbb B}(r_0)\,$ 
be a pseudoball exhibiting the 
local forward square-integrability 
of $\,\phi.\,$ Let $\,G_1',\,G_2'\,$
be open sets with $\,G_1'\subset 
\subset G_2'\,$ where $\,\overline 
{G'_2}\subset U'_1 := {\mathbb B}
(r_1),\,r_1 < r_0.\,$ Choose $\,
\beta'\in C^\infty (U')\,$ such 
that $\,\beta = 0\,$ in a 
neighborhood of $\,\overline {G_1'}
\,$ and $\,\beta'\rfloor U\backslash 
\overline {G_2'} = 1.\,$ Let $\,G_j 
:= p^{-1}(G_j'),\,j = 1,2,\,$ and 
$\,\beta := p^*\beta'\,$ on $\,U.\,$ 
Then the function

\[
h(\xi) := {4\pi^m c_m\over (m-1)!}  
\>\int_{\overline G_2 \backslash G_1}
{\phi\,dd^c (\beta (z){\gamma}(\xi, z)) 
\wedge \upsilon_p^{m-1}},
\quad \forall\xi \in G_1,
\]
is well-defined. Choose a 
$C^\infty$-partition of unity 
$\,\{(U^j_\nu,\rho^j_\nu)\}\,$ 
on $\,V^j\cap \hat U\,$ in terms 
of the open sets $\,U^j_\nu\subseteq 
V^j.\,$ Let $\,{\gamma}'\,$ be 
the function induced by $\,{\gamma}
\,$ on $\,U'\times U'.\,$ For fixed 
$\,\xi\in G_1\,$ the form 
$\,\triangle_{z'} (\beta'(z')
{\gamma}'(\xi', z'))\,dv(z')\,$ is 
compactly supported in a neighborhood 
of $\,\overline G_2.\,$ The 
integrability of $\,\sum_{\nu}'
\hat\phi^{j}_{\nu}\,$ on $\,U'
\backslash \Delta'\,$ implies that 
of the function

\[
f_j(\xi',z') = \big({\sum_{\nu}}'
\hat\phi^{j}_{\nu}\big)(z')
\,\triangle_{z'}(\beta'(z')
{\gamma}'(\xi', z'))\hat\rho(\xi')
\]
is integrable on the product space 
$\,G'_1\times U'_{[a']}[r_1]$ for 
any $\,\hat\rho\in C^\infty (U'(r_1))$
(for $\,r_1 < r_0$). Thus the 
function $\,h,\,$ being equal to 
the sum of integrals (with respect to 
$\,z'$) over $\,U'\,$ of functions of 
the above type (with $\,\hat\rho 
= 1,j =1,\cdots, {\mathfrak s}_a$), 
belongs to $\,C^{{\mathfrak m}}
(G_1)\cap L^1_{{\rm loc}}(G^*_1),\,$ 
by the the Fubini's Theorem. 
Note that by virtue of the 
$L^2$-integrability of $\,
{\sum_{\nu}}'\hat\phi^{j}_{\nu}$
and the H\"older's inequality, 
one has, for $\,\xi\in G_1,\,$  

\[
\|h(\xi)\| \,
\le\,\sum_{j=1}^{{\mathfrak s}_a}[\int_{U'}
{\|{\sum_{\nu}}'\hat\phi^{j}_{\nu}
\big)(z')\|^2}\,dv(z')]^{1\over 2}\,
[\int_{U'}{\|\triangle_{z'}(\beta'(z')
{\gamma}'(\xi', z'))\|^2\,dv(z')}]^{1\over 2} 
\]
thus proving that $\,h\,$ is bounded 
on $\,G_1.\,$ Hence it follows from 
\cite[Lemma 4.2.12]{cT79} that $\,h\,$ 
is locally integrable in $\,G_1.\,$ 
Since $\,h\,$ is $C^\infty\,$ and 
harmonic in $\,G_1^*,\,$ it is 
semi-harmonic in $\,G_1.\,$ It will 
be shown that $\,\forall\eta \in 
C_0^\infty (G_1^*),\,$

\[
(h,\,\eta)_{G_1} = (\phi,\,\eta)_{G_1}.
\]
It suffices to prove the following:

\begin{equation}\label{E:action int.a}
\int_{G_1}{h\,\eta\,\upsilon_p^{m}} 
= \int_{G_1}{\phi\,\eta\,\upsilon_p^{m}}, 
\end{equation}
where $\,\eta\in C_0^\infty 
(U^{j_0}_{\nu_0}\cap G_1^*).\,$ 
By an interchange of the order of 
integration on the left-hand side 
of \eqref{E:action int.a}, this 
equation can be expressed 
equivalently as follows:

\begin{equation}\label{E:action int.b} 
\int_{G_1}{h(\xi)\,\eta(\xi)\,d\tilde\upsilon} 
 = \sum_{j=1}^{{\mathfrak s}_a}
\int_{\overline {G'_2}\backslash \Delta'}
{\big({\sum_{\nu}}'\hat\phi^{j}_{\nu}\big)(z')
\triangle_{z'}\psi(z')\,dv(z')},
\end{equation}
where 

\[
\psi(z') := \beta'(z')
\int_{(U^{j_0}_{\nu_0})'\cap G'_1}
{{\gamma}^{j_0}_{\nu_0}(\xi', z')\,
\hat\eta(\xi')\,dv(\xi')},
\quad z' \in \overline {G'_2},
\]
(where $\,{\gamma}^j_\nu\,$ is induced 
by $\,{\gamma}\,$ on $\,(U^j_\nu)'\times 
(U^j_\nu)',\,$ and the $\,\hat\eta\,$
by $\,\eta\,$ on $\,(U^{j_0}_{\nu_0})'$),
the operation being justified by 
the Fubini's Theorem (for the same 
reason as given above). Observe 
that

\[
\triangle_{z'} \psi (z') = - 
\triangle_{z'} (u(z')) + 
\triangle_{z'} (\int_{(U^{j_0}_{\nu_0})'\cap G'_1}
{{\gamma}^{j_0}_{\nu_0}(\xi', z')\,\hat\eta (\xi')
\,dv(\xi')}),
\]
where the function

\[
u (z') := (1-\beta'(z'))
\int_{(U^{j_0}_{\nu_0})'\cap G'_1}
{{\gamma}^{j_0}_{\nu_0}(\xi', z')\,
\hat\eta (\xi')\,dv(\xi')} 
\]
belongs to $\,C_0^\infty (U'_1).\,$ 
Thus

\begin{equation}\label{E:Lap. of psi}
\triangle_{z'} 
\psi(z') =
\begin{cases}
\hat\eta(z') - \triangle_{z'}(u (z')),
&\text{if $z'\in G'_1\cap (U^{j_0}_{\nu_0})'$,}\\
0 - \triangle_{z'} (u (z')), 
&\text{if $z' \in G'_1\backslash (U^{j_0}_{\nu_0})'$.} 
\end{cases}
\end{equation}
Hence the assertion \eqref{E:action 
int.a} follows from the relations 
\eqref{E:action int.b}-\eqref{E:Lap. 
of psi} and the weak harmonicity of 
$\,\phi.\,$ Consequently $\,h =\phi
\,$ almost everywhere in $\,G_1^*.
\,$ Since $\,G_1'\,$ is an arbitrary 
relatively compact open subset of 
$\,U',\,$ there exists a semi-harmonic 
function $\,\tilde\phi \in
C^{\beta\cap {\mathfrak m}}(X)
\cap C^\infty(X^*)\,$ such that 
$\,\tilde\phi =\phi\,$ a. e. 
in $\,X.\,$ Hence, by what has 
been proved, $\,\tilde\phi\,$ 
satisfies the local solid 
mean-value property in the 
domain of continuity of 
$\,\tilde\phi.\,$

   To prove the assertion "(2) 
$\Rightarrow$ (3)", let $\,U\,$ be a 
pseudo-ball at a point $\,a \in X^*\,$ 
of radius $\,r_0\,$ and $\,0< r < r_0.\,$ Without
loss of generality, assume that $\,\tilde \phi\,$ 
is real-valued. There exists a continuous function
$\,\hat \phi\,$ on the closed ball $\,
{\mathbb{B}_{[a']}[r]}\,$ such that
$\,p^*\hat \phi = \tilde\phi\,$ on $\,U\cap 
p^{-1}({\mathbb{B}_{[a']}[r]}).\,$ Also,
there exists a continuous function $\,h\,$ on $\,
{\mathbb{B}_{[a']}[r]}, \,$ harmonic in
$\,\mathbb{B}_{[a']}(r)$, such that $\,h(w) =
\hat \phi (w),\; \forall w \in \mathbb{S}_{[a']}(r).
\,$ Then the function $\,\psi : = \hat \phi - h\,$ has 
the solid mean-value property at $\,a'\,$ and vanishes 
on $\,\mathbb{S}_{[a']}(r).\,$ The same is true for 
the function $\,- \psi.\,$ Hence it follows from the 
maximum principle (Proposition \ref{P:max.prin.}) that 
$\,\psi \equiv 0 \,$ on $\,\mathbb{B}_{[a']}(r)$. 
Consequently $\,\phi\,$ is semi-harmonic in $\,X$.

\vskip 10 pt
    To prove the assertion "(3) $\Rightarrow$
(1)", let $\, U,\; U_0\,$ be pseudo-balls at 
$\,a\in X^*\,$ with $\,U_0 \subset \subset U 
\subset \subset X^*.\,$ It follows as in 
\eqref{E:weak Laplacian.a} that $\forall w 
\in C^0_0 (U_0'),\,$

\[
\int_{U_0'}{\hat \phi_{\varepsilon}
\,\triangle w\, dv} =
 {\rm const.}\int_{W}{dd^c \hat \phi 
\wedge (\alpha_\varepsilon^{(0)} * w)\, 
\upsilon_{[a']}^{m-1}},
\]
for sufficiently small $\,\varepsilon 
> 0,\,$ where the $\,\hat \phi\,$ 
being induced by $\,\phi\,$ on $\,U_0'.\,$ 
By the semi-harmonicity of $\,\phi,\,$ 
the above last integral vanishes. 
This shows that $\,\hat 
\phi_{\varepsilon}\,$ is harmonic in $\,U'\,$ 
for sufficiently small $\,\varepsilon > 0.\,$ 
Let $\,u : X \to [0,\infty)\,$ be a $C^2$-function with 
compact support in $\,X^*.\,$ By using a partition of
unity, it may be assumed that $\,{\rm Spt}(u)\,$ is 
contained in a pseudo-ball $\,U_0 \subset \subset X^*.
\,$ Then the following relation holds (for 
$\,\hat \phi_{\varepsilon}$ and the induced 
$\,\hat u \in C^2_c (U')\,$ of $\,u$):

\begin{equation}\label{E:weak Laplacian.b}
(\hat u,\,\triangle 
\hat \phi_{\varepsilon})_{U''}\,=\, 
(\hat \phi_{\varepsilon},\,
\triangle \hat u)_{U''} = 0
\end{equation}
It follows then from this relation 
(and the expression for the
semi-Laplace operator) that

\[
|\int_{U_0'}{\hat \phi\,dd^c
\hat u \wedge \upsilon_{[a']}^{m-1}}| \le 
\; {\rm Const.}\; \int_{U_0'}{|\hat \phi - 
\hat \phi_{\varepsilon}|\,
\upsilon_{[a']}^m}.
\]
The $L^1$-convergence of $\,\hat 
\phi_{\varepsilon}\,$ to $\,\hat 
\phi\,$ implies that

\[
\int_{U_0'}{\hat \phi \,dd^c \hat u\wedge 
\upsilon_{[a']}^{m-1}} = 0.
\]
Therefore $\,\phi\,$ is weakly harmonic 
in $\,X$. 
\end{proof}

\begin{remark}\label{R:MVP at a point} 
If $\,p : D\to \Omega\,$ is a Riemann 
domain, an element $\,\phi \in 
{\mathfrak H}_w(D)\cap L^2_{{\rm loc}}
[D]\,$ will be identified with its 
representative $\,\tilde\phi\in 
C^{\beta\cap {\mathfrak m}}(D)\cap 
C^\infty(D^*).\,$ If $\,\phi\,$ is 
locally integrable in $\,D\,$ and 
$\,\phi\in C^0 (D^*),\,$ then 
$\,\phi\,$ is semi-harmonic in 
$\,D\,$ iff in $\,D^*\,$ the local 
near harmonicity or the solid, 
resp. spherical, mean-value property 
holds for $\,\phi.\,$ 
\end{remark}

    The above Theorem gives an 
extension of the Weyl's Lemma 
(\cite{hW40}, pp.~415-416) to a 
Riemann domain:

\begin{corollary}\label{C:removable sing. shf.}
If $\,(X, p)\,$ is a Riemann domain, 
then

\[
{\mathfrak H}_w(X)\cap L^2_{{\rm loc}}
[X] = C^{\beta}(X)\cap C^\infty(X^*) 
\cap\ker\,(\triangle_p \rfloor X^*).
\]
\end{corollary}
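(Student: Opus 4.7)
The plan is to reduce both inclusions directly to Theorem~\ref{T:weak harm. and semi-harm.}, using the fact that the ramification locus $R := X \setminus X^*$ is a thin analytic subset of $X$ (so it is a permissible ``$A$'' in the definition of semi-harmonicity), together with elliptic regularity on $X^*$.

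For the inclusion ``$\subseteq$'', take $\phi \in \mathfrak{H}_w(X) \cap L^2_{\mathrm{loc}}[X]$. I would invoke the implication ``(1)~$\Rightarrow$~(2)'' of Theorem~\ref{T:weak harm. and semi-harm.}, which under the hypothesis $\phi\in L^2_{\mathrm{loc}}[X]$ produces a representative $\tilde\phi \in C^{\beta\cap\mathfrak{m}}(X)\cap C^\infty(X^*)$ of $\phi$ satisfying the local solid mean-value property in the domain of continuity. The implication ``(2)~$\Rightarrow$~(3)'' then gives $\tilde\phi \in \mathfrak{H}(X)$, and in particular $\tilde\phi$ is a $C^\infty$-solution of $\triangle_p\tilde\phi = 0$ on $X^*$ (where $p$ is a local biholomorphism, so local harmonicity of $\tilde\phi$ in $X^*$ is the pull-back of harmonicity on a domain in $\mathbb{C}^m$). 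Identifying $\phi$ with its representative $\tilde\phi$ as in Remark~\ref{R:MVP at a point}, one obtains $\phi\in C^\beta(X)\cap C^\infty(X^*)\cap \ker(\triangle_p\rfloor X^*)$.

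For the inclusion ``$\supseteq$'', take $\phi$ in the right-hand side. Since $R = X\setminus X^*$ is a thin analytic subset of $X$ and $\phi$ is a $C^\infty$- (in particular $C^2$-) solution of $\triangle_p \phi = 0$ on $X\setminus R$, the identity \eqref{E:semi-Laplace operator} of Lemma~\ref{L:Dir. and semi-Laplace oper.} shows that $\phi$ satisfies the semi-Laplace equation \eqref{E:semi-Laplace equ.} pointwise on $X\setminus R$. Hence $\phi \in \mathfrak{H}(X)$, and the implication ``(3)~$\Rightarrow$~(1)'' of Theorem~\ref{T:weak harm. and semi-harm.} (which does not require the $L^2_{\mathrm{loc}}[X]$-hypothesis) yields $\phi \in \mathfrak{H}_w(X)$. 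To finish, I would verify $\phi \in L^2_{\mathrm{loc}}[X]$: on any pseudoball $U\Subset X$ with associated partition of unity $\{(U^j_\nu, \rho^j_\nu)\}$, the local boundedness $\phi\in C^\beta(X)$ gives a uniform bound $\|\phi|\,\overline U\| \le M$, so the pushforward function $\sum_\nu' \hat\phi^j_\nu$ is bounded in modulus on $U'\setminus\Delta'$ by $s_j\, M$ (the sheet number of $p|V^j$ is finite), which secures the square-integrability required by the definition of $L^2_{\mathrm{loc}}[X]$.

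The only delicate point is conceptual rather than computational: the equality in the corollary must be read modulo the identification of a weakly harmonic $L^2_{\mathrm{loc}}[X]$-function (an equivalence class of locally integrable functions) with the pointwise representative $\tilde\phi$ produced by Theorem~\ref{T:weak harm. and semi-harm.}, as already flagged in Remark~\ref{R:MVP at a point}. Once this identification is in place, the corollary is a direct repackaging of the theorem together with the observation that the ramification locus of a Riemann domain is a thin analytic set.
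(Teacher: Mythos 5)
Your proposal is correct and takes essentially the same route as the paper, which offers no separate argument for the corollary but presents it as a direct consequence of Theorem~\ref{T:weak harm. and semi-harm.}: the inclusion ``$\subseteq$'' is the chain (1)~$\Rightarrow$~(2)~$\Rightarrow$~(3) applied to the representative $\tilde\phi$, and ``$\supseteq$'' is (3)~$\Rightarrow$~(1) after noting that $X\setminus X^*$ is thin analytic. Your added check that local boundedness forces membership in $L^2_{\mathrm{loc}}[X]$, and the explicit remark that the equality is read modulo the identification of $\phi$ with $\tilde\phi$ as in Remark~\ref{R:MVP at a point}, are exactly the details the paper leaves implicit.
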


   Theorem \ref{T:weak harm. and semi-harm.} 
and the maximum principle imply the following

\begin{corollary}\label{C:Dir.prob.}
Assume $\,(X, p)\,$ is a Riemann domain. 
Let $\,D\,$ be a domain in $\,X\,$ with 
$\,dD \not = \emptyset.\,$ Assume: {\rm i)} either 
$\,D \subseteq X^0\,$ or $\,D\,$ is irreducible; 
{\rm (ii)} $\eta_{_j} \in C^0 (\partial D)\,$  
for $\,j = 1,\, 2\,$ and $\,|\,\eta_{_1} - 
\eta_{_2}| < \varepsilon \,$ on $\,
\partial D;\,$ {\rm (iii)} $\phi = \phi_j\in C^0 
(\overline D), \,j = 1,\, 2,\,\,$ are bounded, 
weak solutions to the Dirichlet problem

\[
dd^c \,(\phi \,{\upsilon}_p^{m-1})
= \,\rho \;\; {\rm in} \; \, D \backslash A,
\quad \; \phi \,\rfloor \,d D = \eta_{_j},  
\]
where $\,A\,$ is thin analytic in $\,D,\,$ and
$\,\rho \in A^{2m,0}(D \backslash A)$. Then $\,
|\phi_1 - \phi_2| < \varepsilon \,$ on
$\,\overline D$.
\end{corollary}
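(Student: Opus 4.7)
The plan is to reduce to the homogeneous case by setting $\psi := \phi_1 - \phi_2$, and then to combine the extended Weyl Lemma (Corollary \ref{C:removable sing. shf.}) with the maximum principle (Proposition \ref{P:max.prin.}). By linearity, $\psi \in C^0(\overline D)$ is bounded, and as a difference of two weak solutions of $dd^c(\phi\,\upsilon_p^{m-1}) = \rho$ it satisfies the homogeneous equation $dd^c(\psi\,\upsilon_p^{m-1}) = 0$ weakly in $D\setminus A$; equivalently, $\psi \in \mathfrak{H}_w(D\setminus A)$.

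Next I would promote this weak harmonicity to genuine semi-harmonicity on all of $D$. Continuity and boundedness of $\psi$ on $\overline D$ immediately give $\psi \in L^2_{{\rm loc}}[D\setminus A]$, so applying Corollary \ref{C:removable sing. shf.} to the Riemann domain $D\setminus A$ produces a representative $\tilde\psi \in C^\beta(D\setminus A) \cap C^\infty((D\setminus A)^*) \cap \ker(\triangle_p)$; by continuity of $\psi$ this $\tilde\psi$ coincides with $\psi$ pointwise, so $\psi$ is a $C^2$-solution of the semi-Laplace equation off the thin analytic set $A \cup (D\setminus D^*)$. Hence $\psi \in \mathfrak{H}(D)$, and then the implication "(3) $\Rightarrow$ (1)" in Theorem \ref{T:weak harm. and semi-harm.} together with Theorem \ref{T:charac. semi-harm.} delivers the local solid mean-value property for $\psi$ throughout $D$, which in particular supplies the sub-mean inequality \eqref{E:sub-mean-value} needed to invoke the maximum principle.

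Hypothesis (i) of the Corollary matches hypothesis (i) of Proposition \ref{P:max.prin.}, so I can apply the maximum principle simultaneously to $\psi$ and to $-\psi$. For $\psi$: if $\sup_{\overline D}\psi$ is attained at some interior point $z_0 \in D^0$, then $\psi$ is constant on $D$, and by continuity that constant equals a boundary value, hence is strictly less than $\varepsilon$ by hypothesis (ii); if the supremum is attained only on $dD$, the same bound follows directly. The symmetric argument applied to $-\psi$ yields $-\psi < \varepsilon$ on $\overline D$, and combining these two estimates gives $|\phi_1 - \phi_2| < \varepsilon$ throughout $\overline D$.

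The main obstacle will be the passage from "weak solution of an inhomogeneous equation on $D\setminus A$" to "semi-harmonic on $D$": one must verify that the two exceptional analytic loci, namely $A$ (where $\rho$ is prescribed) and the ramification locus $D\setminus D^*$, can be absorbed into a single thin analytic set across which the Weyl-type removability result in Corollary \ref{C:removable sing. shf.} is applicable, and that the $C^\beta$-representative provided there actually coincides with the continuous $\psi$ rather than merely agreeing almost everywhere. A minor side point is the attainment of the supremum on $\overline D$, for which I will invoke the standard convention that $\overline D$ is compact in the Dirichlet formulation.
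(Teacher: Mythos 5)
Your argument is correct and is exactly the route the paper indicates (it gives no written proof, stating only that Theorem \ref{T:weak harm. and semi-harm.} and the maximum principle imply the corollary): reduce to $\psi=\phi_1-\phi_2$, upgrade weak harmonicity of the continuous bounded $\psi$ to semi-harmonicity and thence to the solid mean-value property via Theorem \ref{T:weak harm. and semi-harm.} (continuity of $\psi$ already suffices for the implication ``(1) $\Rightarrow$ (2)'', so the $L^2_{{\rm loc}}[\,\cdot\,]$ verification is a safe detour), and conclude with Proposition \ref{P:max.prin.} applied to $\pm\psi$. Your closing caveats (absorbing $A$ and the ramification locus into one thin analytic set, and relative compactness of $D$ so that the supremum is attained) are the right points to flag and are handled correctly.
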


\section{Euler and Neumann vector fields}\label{S:Euler v.f.}

   For each $\,a \in X,\,$ define $\, \rho  = 
\rho_{_a} := \|p^{[a]}\| : X \to \mathbb{R}.\,$  
The associated $\,\partial$-, respectively, $\bar 
\partial$-, {\it Euler vector field} (multiplied 
by $\,\rho_{_a}$) are given by

\begin{equation}\label{E:d-bar Euler vec.field}
E_{p,a} \> : = \,\rho_{_a} \> {\mathcal E}_{\rho_{_a}}; 
\quad \bar E_{p,a} \> := \,\rho_{_a} \> \bar 
{\mathcal E}_{\rho_{_a}} 
\end{equation}
in $\,X^*.\,$ It is easily shown that

\begin{equation}\label{E:d-bar Euler deri.}
\bar E_{p,a} \,=\, \sum_{j =1}^m 
(\bar p_j - \bar p_j(a)) \,
{\partial \over \partial \bar p_j} 
\quad {\rm in} \; X^*. 
\end{equation}
If $\,\phi\in C^1 (D)\,$ and $\,a \in D,\,$
the $\,a$-{\it radial derivative} of $\,\phi\,$ 
is given by

\begin{equation}\label{E:rad.deri.a}
({\mathfrak R}_{p,a} \,\phi) (z) :\>=
\,\partial_{_{\nabla \rho_{_a}}}
(\phi) \,(z),\quad z \in  D^*.
\end{equation}
Then $\forall z \in D^* \backslash  p^{-1}(a')$,

\begin{equation}\label{E:rad.deri.b}
({\mathfrak R}_{p,a} \,\phi) (z) = \, \sum_{j
=1}^m {(\tilde x_j - \tilde x_j(a))\, {\partial
\phi\over \partial \tilde x_j}\, +  \, (\tilde y_j
- \tilde y_j(a))\, {\partial \phi \over \partial
\tilde y_j} \over \| p^{[a]} (z)\|}
\big\rfloor_z 
\end{equation}
(cf. \cite{aD99}, p. 169).

     Let $\,{\rm j}_{_{dD}} : d D \to X,\,$ and
$\,{\rm j}_{a,r} : d D_a (r)\to X,\,$ denote
the inclusion mapping, for $\,a \in D^*\,$ and 
small positive $\,r.\,$  By tedious calculations it 
can be shown that

\begin{equation}\label{E:Euler deri.b}
{\rm j}_{a,r}^*(({i \over 2\pi})\,\bar \partial
\phi \wedge\upsilon_p^{m-1}) \, (z)
 = \, r^{2m-2}\, \bar E_{p,a} (\phi)
\rfloor_z \,\sigma_{a},\quad \forall z \in d D_a
(r). 
\end{equation}
It follows from the identities 
\eqref{E:rad.deri.a}-\eqref{E:Euler deri.b}
that, $\forall z \in D^*\backslash p^{-1}(a')$,

\begin{equation}\label{E:rad.deri.c}
{\rm j}_{a,r}^*(d^c \phi
\wedge\upsilon_p^{m-1}) \, (z)
 = \, {r^{2m-1}\over 2}\,({\mathfrak R}_{p,a}\,
\phi) (z)\,\sigma_{a}. 
\end{equation}
The following Proposition shows that the {\it normal 
derivative} of $\,\phi \in C^1 (\overline D)\,$ on 
$\,dD\,$ can be intrinsically defined: Let $\, d 
\sigma_{_{d D}}\,$ denote the (Lebesgue) surface measure 
on $\,dD\,$ induced by the local patches $\,p_{_U} : = 
p : U \to \mathbb{B}_{[a']}(r_0),\,$ at a point $\,a 
\in X^* \cap d D,\,$ and orientation-preserving 
diffeomorphisms of $\,\mathbb{B}_{[a']}(r_0).\,$

\begin{proposition}\label{P:normal deri.}
Let $\,\rho = 0\,$ be a local $\,C^1$ defining equation 
of $\,d D\,$ in a neighborhood $\,U \subseteq X^*$ of 
$\,a \in d D \cap X^*$ with $\,d \rho \not = 0\,$ on 
$\,d D \cap U.\,$ Then $\forall \phi \in C^1
(\overline D),\,$

\begin{equation}\label{E:normal deri.}
{\rm j}_{_{dD}}^*(d^c \phi \wedge
\upsilon_p^{m-1})\,= \, (-1)^{{m(m-1)\over
2}} {1\over 2 \|\mathbb{S}\|}
\partial_{\nu} \phi \,d\sigma_{_{d D}},
\end{equation}
where $\,\nu : = {\nabla \rho \over
\| \nabla \rho \|},\;\| \nabla \rho \|\,$ 
being the induced Euclidean norm of $\nabla 
\rho.\,$
\end{proposition}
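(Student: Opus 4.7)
The plan is to reduce the proposition, via a biholomorphic chart at $a$, to a Euclidean computation in $\mathbb{C}^m$, then extract the tangential part of $d^c\phi\wedge\upsilon_p^{m-1}$ along $dD$ by wedging with $d\rho$ and invoking Lemma~\ref{L:Dir. and semi-Laplace oper.}(1).

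Since $a\in X^*$, the neighborhood $U$ may be shrunk so that $p_U:U\to \mathbb{B}_{[a']}(r_0)$ is biholomorphic. Pulling back by $p_U^{-1}$ identifies the Euclidean inner product, the gradient, the form $\upsilon_p$, and the surface measure $d\sigma_{dD}$ with their standard counterparts on $\mathbb{C}^m$, so it suffices to establish \eqref{E:normal deri.} in the Euclidean setting with $\rho$ a $C^1$ real defining function of a piece of boundary. Applying \eqref{E:Dir. and dir. deri.} with the pair $(\phi,\psi)$ replaced by $(\rho,\phi)$, and using $\partial_{\nabla\rho}(\phi)=(\nabla\phi,\nabla\rho)=\|\nabla\rho\|\,\partial_\nu\phi$, gives
\[
d\rho\wedge d^c\phi\wedge\upsilon^{m-1}
\;=\;\frac{\|\nabla\rho\|}{4m}\,(\partial_\nu\phi)\,\upsilon^m.
\]
On $dD$ one has $d\rho=\|\nabla\rho\|\,\nu^{\flat}$, where $\nu^{\flat}$ is the $1$-form dual to $\nu$. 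Writing $d^c\phi\wedge\upsilon^{m-1}=F\,d\sigma_{dD}+\nu^{\flat}\wedge\beta$ in a neighborhood of $dD$ (so that $\mathrm{j}_{dD}^{*}(d^c\phi\wedge\upsilon^{m-1})=F\,d\sigma_{dD}$), and wedging with $\nu^{\flat}$, the previous identity becomes
\[
\|\nabla\rho\|\,F\,(\nu^{\flat}\wedge d\sigma_{dD})
\;=\;\frac{\|\nabla\rho\|}{4m}\,(\partial_\nu\phi)\,\upsilon^m.
\]

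It remains to compare the two top forms. The choice of orientation under which $\upsilon^m$ is positive corresponds to the complex ordering $dx_1\wedge dy_1\wedge\dots\wedge dx_m\wedge dy_m$, whereas $\nu^{\flat}\wedge d\sigma_{dD}$ equals the real-ordered Euclidean volume form $dx_1\wedge\dots\wedge dx_m\wedge dy_1\wedge\dots\wedge dy_m$. The permutation reordering the tuple $(x_1,y_1,\dots,x_m,y_m)$ into $(x_1,\dots,x_m,y_1,\dots,y_m)$ has $m(m-1)/2$ inversions, so
\[
\upsilon^m=\frac{m!}{\pi^m}\,dx_1\wedge dy_1\wedge\dots\wedge dx_m\wedge dy_m
=(-1)^{m(m-1)/2}\,\frac{m!}{\pi^m}\,\nu^{\flat}\wedge d\sigma_{dD}.
\]
Inserting this and using $|\mathbb{S}|=|\mathbb{S}^{2m-1}|=2\pi^m/(m-1)!$ yields
\[
F=(-1)^{m(m-1)/2}\,\frac{(m-1)!}{4\pi^m}\,\partial_\nu\phi
=(-1)^{m(m-1)/2}\,\frac{1}{2|\mathbb{S}|}\,\partial_\nu\phi,
\]
which is precisely \eqref{E:normal deri.}. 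The main obstacle is this orientation bookkeeping: one has to reconcile the complex-geometric convention (making $\upsilon^m$ positive) with the positive Lebesgue surface measure $d\sigma_{dD}$, and the sign $(-1)^{m(m-1)/2}$ is exactly the sign of the permutation relating these two orderings.
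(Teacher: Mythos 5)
Your argument is correct in structure and reaches the stated formula, but it takes a genuinely different route from the paper's. The paper expands $d^c\phi\wedge\upsilon_p^{m-1}$ explicitly in the basis of $(2m-1)$-forms $dp_{[j]}\wedge d\bar p$ and $dp\wedge d\bar p_{[j]}$, pulls back each basis element to $dD$ separately via the (unproved, ``tedious'') identities \eqref{E:loc.vol.form.a,b}, and reassembles $\partial_\nu\phi$ at the end using \eqref{E:comp.dir.deri.}--\eqref{E:part.dir.deri.}. You instead wedge with $d\rho$ and invoke Lemma \ref{L:Dir. and semi-Laplace oper.}(1), which extracts the entire tangential component of $d^c\phi\wedge\upsilon_p^{m-1}$ in one stroke and replaces $2m$ separate pullback computations by a single comparison of two top-degree forms; this is shorter and makes it transparent that the only real content is the constant, i.e.\ the ratio of $\upsilon^m$ to $\nu^{\flat}\wedge d\sigma_{_{dD}}$. (A minor repair: the decomposition $d^c\phi\wedge\upsilon^{m-1}=F\,d\sigma_{_{dD}}+\nu^{\flat}\wedge\beta$ should be read pointwise along $dD$, or with $d\sigma_{_{dD}}$ replaced by a smooth extension to a neighborhood; either reading legitimizes the wedge step since ${\rm j}_{_{dD}}^{*}\nu^{\flat}=0$.)

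The one assertion you do not justify is exactly where the sign lives: that $\nu^{\flat}\wedge d\sigma_{_{dD}}$ equals the real-ordered volume form $dx_1\wedge\cdots\wedge dx_m\wedge dy_1\wedge\cdots\wedge dy_m$ rather than the complex-ordered form $dx_1\wedge dy_1\wedge\cdots\wedge dx_m\wedge dy_m$, with respect to which $\upsilon^m$ is declared positive in \S\ref{S:Pre.}. If $d\sigma_{_{dD}}$ were oriented by the Stokes convention relative to that complex ordering, the factor $(-1)^{m(m-1)/2}$ would disappear entirely, so the sign in \eqref{E:normal deri.} is purely a statement about which convention the paper uses. The paper's description of $d\sigma_{_{dD}}$ does not settle this a priori; the convention is fixed only implicitly by \eqref{E:loc.vol.form.a,b}, and one can check (e.g.\ for the unit sphere in $\mathbb{C}^2$ at the point $(1,0)$, where ${\rm j}^*(dp_{[1]}\wedge d\bar p)=2\,dx_2\wedge dy_1\wedge dy_2$ forces $\nu^{\flat}\wedge d\sigma = dx_1\wedge dx_2\wedge dy_1\wedge dy_2$) that those identities do encode your real-ordered convention. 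So your proof is consistent with the paper, but as written the sign enters as an assumption about the surface measure rather than a deduction; you should either verify your orientation claim against one instance of \eqref{E:loc.vol.form.a,b} or derive it from an explicit definition of $d\sigma_{_{dD}}$.
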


\begin{proof} 
Let $\,d p_{[j]} : \> = d  p_1 \wedge \cdots \wedge 
d p_{j-1} \wedge d p_{j+1} \wedge
\cdots \wedge d p_m\,$ and $\,d \bar p : \>
= d \bar p_1 \wedge \cdots \wedge d \bar p_m.\,$ 
It can be shown (by tedious computations) that

\begin{equation}\label{E:loc.vol.form.a,b}
  \begin{split}
{\rm j}_{_{U \cap d D}}^*(dp_{[j]} \wedge d \bar
p) &  = \,2^m \,i^m\,(-1)^{m+j-1} \rho_j \, 
d\sigma_{_{U \cap d D}},\\
{\rm j}_{_{U \cap d D}}^*(dp \wedge d \bar
p_{[j]}) & = \,2^m \,i^m\,(-1)^{j-1} \rho_{\bar j} \, 
d\sigma_{_{U \cap d D}},
 \end{split}
\end{equation}
where $\,\rho_j : = (\partial \rho /
\partial p_j)\,\| \nabla \rho \|^{-1}\,$ and
$\,\rho_{\bar j} : = (\partial \rho /
\partial \bar p_j)\,\| \nabla \rho \|^{-1}.\,$ 
It follows from the definition \eqref{E:Euc. form}
that

\[
d^c \phi \wedge \upsilon_p^{m-1} = (-1)^{{(m-1)(m-2)
\over 2}} {({i\over 2})^m \over \|\mathbb{S}\|}
\sum_{j=1}^m \phi_{\bar p_j}(-1)^{m+j} dp_{[j]} 
\wedge d \bar p) - (-1)^{j-1}\phi_{p_j} dp \wedge 
d \bar p_{[j]}.
\]
From this the desired conclusion can be deduced by
making use of the identities 
\eqref{E:comp.dir.deri.}-\eqref{E:part.dir.deri.} 
and \eqref{E:loc.vol.form.a,b}. 
\end{proof}

     A relatively compact open set $\,G \subseteq X\,$ 
is called a {\it weak Stokes domain} in $\,X\,$ iff 
there exists a thin analytic set $\,A\,$ in $\,X\,$ 
containing the singular points of $\,X\,$ such that 
$\,\partial G \backslash A\,$ has locally finite 
${\mathfrak H}^{2m-1}$-measure and there exists an 
(oriented) boundary manifold $\,S\,$ of $\,G \cap 
{\mathcal R} (X)\,$ contained in $\,dG\,$ such that 
$\,\partial G \backslash (A \cup S)\,$ has zero 
${\mathfrak H}^{2m-1}$-measure. Since the set 
$\,d G \backslash (A \cup S)\,$ is a set of 
zero $\,{\mathfrak H}^{2m-1}$-measure, integration
over $\,S\,$ and $\,dG\,$ make no difference, 
provided one of them exists (and the notation $\,S\,$ 
will not be explicitly used).

     Owing to the normal derivative formula 
\eqref{E:normal deri.} and the equations 
\eqref{E:Dir. and dir. deri.}-\eqref{E:semi-Laplace 
operator}, the Stokes' theorem (\cite{cT79}, (7.1.3)) 
yields the following generalized {\it Green's first 
identity}:

\begin{lemma}\label{L:Green's identity}
If $\,G \subseteq X\,$ is a weak Stokes domain, then 
for all $\,\eta \in C^{\lambda}(\overline G)\,$ and 
$\,\phi \in C^{1,1}(\overline G),\,$

\begin{equation}\label{E:Gr.id.}
 [\eta,\bar \phi]_{_G} 
\, = \,\mathop{\int}_{d G}{\eta \, d^c \phi \wedge
\upsilon_p^{m-1}} \,-\, \mathop{\int}_G {\eta
\,dd^c \phi \wedge \upsilon_p^{m-1}}.
\end{equation}
\end{lemma}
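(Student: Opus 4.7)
The plan is a direct application of the Stokes' theorem for weak Stokes domains (cited as \cite{cT79}, (7.1.3)) to the $(2m-1)$-form $\omega := \eta \, d^c \phi \wedge \upsilon_p^{m-1}$. First I would verify integrability: since $\eta \in C^{\lambda}(\overline G)$ and $\phi \in C^{1,1}(\overline G)$, the forms $d\eta \wedge d^c\phi \wedge \upsilon_p^{m-1}$ and $\eta \, dd^c\phi \wedge \upsilon_p^{m-1}$ have locally bounded measurable coefficients on $G$, and $\omega$ restricted to the (oriented) boundary manifold $S \subseteq dG$ is a bounded measurable $(2m-1)$-form there; the complementary boundary piece $\partial G \setminus (A \cup S)$ has zero $\mathfrak{H}^{2m-1}$-measure and hence contributes nothing to the boundary integral.

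Next, using the Leibniz rule together with the fact that $\upsilon_p = dd^c \|p^{[a]}\|^2$ is $d$-closed (and hence so is $\upsilon_p^{m-1}$), I would compute
\[
d\omega \;=\; d\eta \wedge d^c\phi \wedge \upsilon_p^{m-1} \,+\, \eta \, dd^c\phi \wedge \upsilon_p^{m-1},
\]
the cross term $-\eta \, d^c\phi \wedge d\upsilon_p^{m-1}$ vanishing identically. Applying the weak Stokes theorem then yields
\[
\int_{dG} \eta \, d^c\phi \wedge \upsilon_p^{m-1} \;=\; \int_G d\eta \wedge d^c\phi \wedge \upsilon_p^{m-1} \,+\, \int_G \eta \, dd^c\phi \wedge \upsilon_p^{m-1}.
\]
Observing that by the definition \eqref{E:Dr.pr.} of the Dirichlet product one has $[\eta,\bar\phi]_G = \int_G d\eta \wedge d^c\phi \wedge \upsilon_p^{m-1}$ (since $\overline{\bar\phi} = \phi$), rearranging terms gives the desired identity \eqref{E:Gr.id.}.

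The main technical obstacle is justifying the Stokes step under the weak regularity hypotheses imposed here: $d\eta$ exists only almost everywhere on $G$ (Rademacher, since $\eta$ is locally Lipschitz), $dd^c\phi$ is only locally bounded and measurable (as $\phi \in C^{1,1}$), and $\partial G$ is only a real submanifold away from a thin analytic set $A$. This is precisely the setting for which the weak Stokes theorem of \cite{cT79} is formulated, so once integrability on $G$ and on $S$ has been checked and the $\mathfrak{H}^{2m-1}$-negligibility of $\partial G \setminus (A \cup S)$ has been invoked, the remainder of the argument reduces to the Leibniz calculation above and the definition of $[\,\cdot\,,\,\cdot\,]_G$.
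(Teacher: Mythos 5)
Your proposal is correct and follows essentially the same route as the paper, which likewise obtains \eqref{E:Gr.id.} by applying the Stokes' theorem of \cite{cT79}, (7.1.3) to the form $\eta\,d^c\phi\wedge\upsilon_p^{m-1}$ on the weak Stokes domain, with the Leibniz expansion and the $d$-closedness of $\upsilon_p^{m-1}$ supplying the identification of the interior terms with $[\eta,\bar\phi]_{_G}$ and $\int_G\eta\,dd^c\phi\wedge\upsilon_p^{m-1}$. Your explicit attention to integrability and to the $\mathfrak{H}^{2m-1}$-negligibility of $\partial G\setminus(A\cup S)$ makes the argument more complete than the paper's one-sentence justification.
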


\begin{proposition}\label{P:charac. semi-harm.a}
If $\,G \subseteq X\,$ is a weak Stokes domain, then, 
with respect to the Dirichlet product, 

   {\rm (1)}  ${\mathfrak H}(G) \cap C^{1,1}(G)\,= 
\, C^{1,1} (G)\cap (C^\lambda_0 (G;\mathbb{R}))^\perp;$ 

   {\rm (2)} ${\mathfrak H}(G) \cap C^{1,1}
(\overline G) \,= \, C^{1,1} (\overline G)\cap 
(C^{\lambda,(c)} (\overline G))^\perp.$ 
\end{proposition}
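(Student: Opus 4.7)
The plan is to extract both characterizations from Green's first identity (Lemma~\ref{L:Green's identity}), substituting $\bar\phi$ for $\phi$ in that identity so that the Dirichlet product itself appears on the left-hand side:
\[
[\eta,\phi]_G \,=\, \mathop{\int}_{dG}{\eta\,d^c\bar\phi\wedge\upsilon_p^{m-1}}\,-\,\mathop{\int}_G{\eta\,dd^c\bar\phi\wedge\upsilon_p^{m-1}}.
\]
This identity is valid whenever $\eta$ is locally Lipschitz and $\phi$ is $C^{1,1}$ on the relevant closure; for Part~(1) one works on a slightly shrunken subdomain containing the support of $\eta$. The $C^{1,1}$-regularity of $\phi$ makes the interior integrand a locally bounded top-degree form on $G$, and this is what will allow us to ignore thin analytic exceptional sets, whose Lebesgue measure (transferred via any local covering of $p$ to $\mathbb{C}^m$) is zero.

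For Part~(1), take $\phi\in\mathfrak{H}(G)\cap C^{1,1}(G)$ and $\eta\in C^\lambda_0(G;\mathbb{R})$: the boundary integral vanishes by compact support, while the interior integral vanishes because $dd^c\bar\phi\wedge\upsilon_p^{m-1}=0$ off the thin analytic set on which $\bar\phi$ is $C^2$ and semi-harmonic. Conversely, assume $[\eta,\phi]_G=0$ for every $\eta\in C^\lambda_0(G;\mathbb{R})$; then $\int_G\eta\,dd^c\bar\phi\wedge\upsilon_p^{m-1}=0$ for all such $\eta$. Restricting to $G^*:=G\cap X^*$ and pulling back along each local covering sheet of $p$, the formula \eqref{E:semi-Laplace operator} translates the vanishing into $\triangle\bar\phi=0$ holding weakly on open subsets of $\mathbb{C}^m$. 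Classical Weyl regularity promotes $\bar\phi$ (and hence $\phi$) to a smooth harmonic function on $G^*$, and since $G\setminus G^*$ is thin analytic in $G$, this is exactly semi-harmonicity of $\phi$.

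For Part~(2) take $\eta\in C^{\lambda,(c)}(\overline G)$ with $\eta\rfloor dG=c$. In the forward direction the interior term is handled as in Part~(1), and the boundary term equals $c\int_{dG}d^c\bar\phi\wedge\upsilon_p^{m-1}$; a second Stokes application on the weak Stokes domain converts this into $c\int_G dd^c\bar\phi\wedge\upsilon_p^{m-1}=0$. For the reverse direction I would reduce to Part~(1): any $\eta\in C^1_0(G;\mathbb{R})$ extended by zero lies in $C^{\lambda,(c)}(\overline G)$ with constant boundary value $0$, so the hypothesis yields $[\eta,\phi]_G=0$ for all such $\eta$, and a convolutional mollification in local Euclidean charts (legitimated by the local boundedness of the first derivatives of $\phi$) extends this to every $\eta\in C^\lambda_0(G;\mathbb{R})$, allowing Part~(1) to conclude. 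The most delicate point is the passage from ``vanishes off a thin analytic set'' to ``vanishes almost everywhere'' for the form $dd^c\bar\phi\wedge\upsilon_p^{m-1}$; this step relies crucially on the $C^{1,1}$ hypothesis to control the form across the exceptional locus, and the second Stokes application in Part~(2) uses precisely the weak Stokes structure together with the thinness of $G\setminus G^*$.
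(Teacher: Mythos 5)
Your proposal is correct in substance and reaches both identities, but it takes a genuinely different route from the paper on the one implication that carries the real content, namely the reverse inclusion in Part~(1). You integrate by parts against arbitrary compactly supported test functions, pass to local Euclidean charts on $G^*=G\cap X^*$, and invoke the classical Weyl lemma to upgrade the weakly harmonic $C^{1,1}$ function to a smooth harmonic one, finishing because $G\setminus G^*$ is thin analytic; this is a self-contained, classical argument. The paper instead tests the Dirichlet product against the single family of Lipschitz functions $\eta_{a,r}=\|p^{[a]}\|^2-r^2$ on $U_{[a]}(r)$ (extended by $0$), observes that $[\phi,\eta_{a,r}]_G=[\phi,\|p^{[a]}\|^2]_{a,r}$, and then reads off near-harmonicity from the identity \eqref{E:rel.spher. and solid MV}, after which Theorems \ref{T:charac. semi-harm.} and \ref{T:weak harm. and semi-harm.} deliver semi-harmonicity. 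The paper's route explains why the test class is taken to be $C^\lambda_0$ rather than $C^1_0$ (the crucial test function is only Lipschitz) and reuses the mean-value machinery already built; your route is more elementary and bypasses Theorems \ref{T:charac. semi-harm.}--\ref{T:weak harm. and semi-harm.} entirely. Your forward directions and the Part~(2) reductions agree with the paper's (the paper likewise extends boundary-vanishing test functions by zero and appeals to Part~(1), and implicitly performs the same second Stokes application you make explicit); your extra mollification step in Part~(2) is harmless but not really needed, since the Part~(1) argument only requires smooth compactly supported test functions. The measure-theoretic point you flag --- that $dd^c\bar\phi\wedge\upsilon_p^{m-1}$ vanishes a.e.\ once it vanishes off a thin analytic set, the form being locally bounded by the $C^{1,1}$ hypothesis --- is handled correctly.
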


\begin{proof} 
  (1) Assume $\,\phi \in C^{1,1} (G)\cap 
(C^\lambda_0 (G;\mathbb{R}))^\perp.\,$
For each $\,a \in G^*\,$ choose a pseudo-ball 
$\,U \subset \subset G\,$ at $\,a\,$ of radius 
$\,r_0.\,$ The function $\,\eta_{a,r} : 
\overline G \to \mathbb{R},\; r \in (0,r_0),\,$ 
defined by

\[
\eta_{a,r} : =
\begin{cases} 
  \|p^{[a]}\|^2 - r^2, & \text{if $z \in 
   U_{[a]}(r)$,} \\  
   0, & \text{if $z \in \overline G \,\backslash
   \,\overline {U_{[a]}(r)}$},
\end{cases}
\]
is locally Lipschitz in $\,G.\,$ Thus $\,[\phi, 
\|p^{[a]}\|^2]_{a,r}= [\phi,\eta_{a,r}]_{_G} = 0.\,$ 
By the identity \eqref{E:rel.spher. and solid MV}, 
the condition that $\,[\phi, \|p^{[a]}\|^2]_{a,r} \, 
= \,0\,$ locally in $\,G^*\,$ (i.e. for small $\,r > 
0 $) is equivalent to $\,\phi\,$ being locally nearly 
harmonic in $\,G^*,\,$ hence, by Theorems
\ref{T:charac. semi-harm.} and \ref{T:weak harm. 
and semi-harm.}, also to $\,\phi\,$ being 
semi-harmonic in $\,G.\,$ The converse assertion 
follows from the Green's identity \eqref{E:Gr.id.}

  (2) Let $\, \phi \in {\mathfrak H}(G) \cap
C^{1,1} (\overline G).\,$ If $\,\xi \in C^\lambda 
(\overline G)\,$ and $\,\xi \rfloor \partial G = 
{\rm const.},\,$ the Green's identity 
\eqref{E:Gr.id.} implies that $\,[\bar \xi, \bar 
\phi]_{_G} = 0.\,$ Hence by the hermitian symmetry of 
the Dirichlet product (\cite{cT07}, (3.6)), 
$\, [\phi, \xi]_{_G} = 0.\,$ Conversely, if $\, w 
\in C^{1,1} (\overline G)\,$ such that $\, [w, \xi]_{_G} 
= 0,\;\forall \xi \in C^{\lambda}(\overline G)\,$ with 
$\,\xi \rfloor \partial G = 0,\,$ then by the assertion 
(1), $\,w\,$ is semi-harmonic in $\,G$.
\end{proof}

\begin{proposition}\label{P:Neum. prob.}
Assume that $\,G \subset X\,$ is a weak Stokes
domain with $\,dG \not = \emptyset,\; \rho 
\in A^{2m,0}(G \backslash A; \mathbb{R}),\,$ where 
$\,A\,$ is thin analytic in $\,G,\,$ and $\,\eta \in 
C^\lambda (\partial G; \mathbb{R}).\,$ If the Neumann 
problem

\begin{equation}\label{E:Neum. prob.}
dd^c \,(\phi \,{\upsilon}_p^{m-1})
= \rho \quad {\rm in} \;\, G \backslash A, \quad
\; \partial_\nu \phi = \eta \;\;{\rm on} \;\,d G
 \backslash A
\end{equation}
admits a real, weak {\rm (}resp. strong{\rm )} solution 
$\,\phi = \phi_0 \in C^1 (\overline G)\,$ 
{\rm (}resp. $\,C^{1,1}(\overline G)${\rm )}, 
then the set of all real, weak {\rm (}resp. strong{\rm )} 
solutions in $\,C^1 (\overline G)\,$ {\rm (}resp. 
$\,C^{1,1}(\overline G)${\rm )} of the equation
\eqref{E:Poisson equ.} on $\,G \backslash A\,$ subject
to the boundary condition

\begin{equation}\label{E:Neum. prob. bdry. cond.} 
\partial_{_\nu} \phi \ge \eta \quad ({\rm or}\;\;
\partial_{_\nu} \phi \le \eta)\quad {\rm on}\;\,
d G \backslash A
\end{equation}
is given by $\,\{\phi_0 + {\rm const.}\}$ 
\end{proposition}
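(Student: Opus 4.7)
The plan is to study the difference $u := \phi - \phi_0$ of two putative solutions, which is real and, since the right-hand side $\rho$ cancels, is a (weak, respectively strong) solution of the homogeneous semi-Laplace equation in $G\backslash A$. The inequality $\partial_\nu \phi \ge \eta$ (respectively $\partial_\nu \phi \le \eta$) together with $\partial_\nu \phi_0 = \eta$ gives $\partial_\nu u \ge 0$ (respectively $\partial_\nu u \le 0$) on $dG\backslash A$. In the strong case $u\in C^{1,1}(\overline G)$ satisfies the equation pointwise on $G\backslash A$, so $u \in {\mathfrak H}(G)$. In the weak case $u\in C^1(\overline G)\cap {\mathfrak H}_w(G)\cap L^2_{{\rm loc}}[G]$, and Corollary \ref{C:removable sing. shf.} promotes $u$ to a semi-harmonic function that is $C^\infty$ on $G^*$. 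The task reduces to showing that every such $u$ is constant.

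Step one is to deduce $\partial_\nu u \equiv 0$ on $dG\backslash A$. Applying Green's identity \eqref{E:Gr.id.} with $\eta \equiv 1$ and $\phi := u$, the Dirichlet product $[1,u]_G$ vanishes since $d 1 = 0$, and the interior integral vanishes by semi-harmonicity of $u$. By the normal-derivative formula \eqref{E:normal deri.}, the boundary integral is a nonzero scalar multiple of $\int_{dG\backslash A} \partial_\nu u\, d\sigma_{dG}$, which therefore vanishes; combined with the constant-sign hypothesis on $\partial_\nu u$, this forces $\partial_\nu u = 0$ almost everywhere on $dG\backslash A$.

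Step two: take $\eta := u$ and $\phi := u$ in Green's identity. The interior integral again vanishes by semi-harmonicity, and the boundary integral vanishes by step one combined with \eqref{E:normal deri.}, so $[u,\bar u]_G = 0$. By \eqref{E:Dir. and dir. deri.} together with a direct computation from \eqref{E:comp.dir.deri.} and \eqref{E:part.dir.deri.}, for real $u$ one has $\partial_{\nabla u}(u) = \|\nabla u\|^2$; hence
\begin{equation*}
\frac{1}{4m}\int_{G^*} \|\nabla u\|^2\, \upsilon_p^m \;=\; 0.
\end{equation*}
Thus $\nabla u \equiv 0$ on $G^*$. Since $X\backslash X^*$ has real codimension at least two, $G^*$ is connected whenever $G$ is, and the continuity of $u$ on $\overline G$ yields $u = {\rm const.}$ on $G$ (on each connected component, in general). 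Therefore $\phi = \phi_0 + {\rm const.}$, covering both the $\ge$ and $\le$ versions of the boundary condition.

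The main obstacle is the rigorous application of Green's identity in the weak case, since Lemma \ref{L:Green's identity} is stated for $\phi \in C^{1,1}(\overline G)$ while the weak solution is only assumed in $C^1 (\overline G)$. Because Corollary \ref{C:removable sing. shf.} gives $u \in C^\infty (G^*)$, one localises: apply Green's identity on a smooth exhaustion of $G\backslash A$ by relatively compact sub-Stokes domains, and pass to the limit using the $C^1$-regularity of $u$ on $\overline G$ together with the weak Stokes structure (in particular, $\partial G \backslash (A\cup S)$ has zero ${\mathfrak H}^{2m-1}$-measure and $A$ has real codimension at least two, so neither surface nor interior singularities obstruct the limit).
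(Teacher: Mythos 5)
Your argument is correct and follows essentially the same route as the paper: form the difference $\chi=\phi-\phi_0$, use Stokes/Green with the constant test function and the one-signed normal derivative to conclude $\partial_\nu\chi\equiv 0$ on $dG\backslash A$, then apply Green's identity with $\chi$ against itself to get $[\chi,\chi]_G=0$, hence $\|\nabla\chi\|^2=0$ a.e.\ and $\chi={\rm const.}$ Your closing remark on justifying Green's identity for the merely $C^1$ weak solution is a reasonable refinement of a point the paper passes over silently.
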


\begin{proof} 
   Since every strong solution in $\,C^{1,1} 
(\overline G)\,$ to the equation \eqref{E:Poisson equ.} 
is a weak solution, it suffices to consider the case
where $\,\phi = \phi_1 \in C^1 (\overline G)\,$ is a real, 
weak solution to the equation \eqref{E:Poisson equ.}
satisfying the boundary condition 
\eqref{E:Neum. prob. bdry. cond.}. Set $\,\chi 
: \>= \phi_1 - \phi_0.\,$ Since $\,\chi\,$ is 
semi-harmonic in $\,G\,$ and, by the identity 
\eqref{E:normal deri.}, $\, d^c \chi \wedge 
{\upsilon}_p^{m-1}\,\rfloor \,d G \backslash A 
\ge 0\,$ (or $\,\le 0$), one has 

\[
\mathop{\int}_{d G}{d^c \chi \wedge \upsilon_p^{m-1}} 
= \mathop{\int}_G {dd^c \chi \wedge \upsilon_p^{m-1}} 
= \, 0.
\]
It follows that $\,d^c \chi \wedge {\upsilon}_p^{m-1}
\,\rfloor \,d G \backslash A \equiv 0.\,$ Then the 
Green's identity \eqref{E:Gr.id.} yields

\[
[\chi,\chi]_{_G} = \mathop{\int}_{d G}{\chi \, 
d^c \chi \wedge \upsilon_p^{m-1}} -
\mathop{\int}_G {\chi\,dd^c \chi \wedge
\upsilon_p^{m-1}} = \, 0.
\]
Hence $\,\|\bigtriangledown \chi \|^2 = 0\,$ a.e. 
in $\,G.\,$ Therefore the function $\,\chi\,$
is locally constant in $\,G^0.\,$ Consequently 
$\,\chi =$ constant on $\,\overline G.\,$ This 
completes the proof of the Proposition.
\end{proof}

\begin{remark}\label{E:complex Neum. prob.}
The above Proposition implies that, given $\,\rho 
\in A^{2m,0}(G \backslash A; \mathbb{C})\,$ and 
$\,\eta \in C^\lambda (\partial G; \mathbb{C}),\,$ 
an entirely similar assertion holds for the 
complex Neumann problem \eqref{E:Neum. prob.}.
\end{remark}

\begin{remark}\label{R:const.harm.func.}
Let $\,G \subseteq X\,$ be a weak Stokes domain 
with $\,dG \not = \emptyset.\,$ If $\,\phi \in C^1
(\overline G)\,$ is weakly harmonic in $\,G,\,$
then $\,\phi = {\rm const.}$ in $\,\overline G\,$ 
iff $\,\partial_\nu \phi \rfloor \, dG = 
{\rm const.}$
\end{remark}

\begin{example}\label{E:Newm. pr. for homog. poly.}
Let $\,P\,$ be a real homogeneous polynomial of degree 
$\,l\,$ in the variables $\,x_1, y_1\cdots, x_{2m}, 
y_{2m}.\,$ It is well-known that $\,P\,$ can be written

\[
P = \sum_{j=0}^l {\|x\|^{l-j} H_j(x)},
\] 
where $\,H_j\,$ is a harmonic homogeneous polynomial 
of degree $\,j,\,$ with $\,H_j \equiv 0\,$ whenever 
$\, l-j = {\rm odd}.\,$ The $\,H_j\,$ can be calculated 
by an effective algorithm {\rm (see} \cite{AR95}{\rm )}. 
Let $\,\pi : X \to \mathbb{C}^m\,$ be an analytic covering 
with sheet number $\,s.\,$ Set $\,\tilde P =  \pi^* P
\,$ and $\,G : = \{z \in X\,|\,\|\pi(z)\| < 1\}.\,$ 
It follows from the identity \eqref{E:normal deri.} and 
{\rm Proposition \ref{P:Neum. prob.}} that the set of all the 
strong solutions in $\,C^{1,1}(\overline G)\,$ of the
Neumann problem{\rm :}

\[
dd^c \,(\phi \,{\upsilon}_\pi^{m-1}) 
= 0 \quad {\rm in} \;\, G^*, \quad
\; \partial_{_\nu} \phi \ge \tilde P - H_0 \;\;
{\rm on} \;\,d G \cap G^*,
\]
is given by $\,\{\psi + {\rm const.}\},\,$ where 
$\,\psi : = \sum_{j=1}^l {\pi^*H_j/j}.\,$ 
Consequently,

\[
\mathop{\int}_{d G}
{\tilde P \, d \sigma_{_{d G}}} = 
\begin{cases}
s \,H_0\, \,|\mathbb{B}|, & \;
\text{if $\,l = {\rm even}$,} \\
0, & \; \text{if $\,l = {\rm odd}$.} 
\end{cases}
\]
\end{example}

      In \cite {hW13}, p. 182, Weyl gave an 
alternative definition of the Laplace operator in 
terms of the Gauss' divergence theorem. In this 
light it makes sense to define, in view of the 
identity \eqref{E:normal deri.}, the {\it harmonic 
residue} of a function $\,\phi \in C^1 (D \backslash 
\{a\})\,$ at a point $\,a \in D^0\,$ as follows:

\begin{equation}\label{E:harm.res.}
   Res_a (\phi,r) :\> = 
  \begin{cases} 
  \mathop{\int}_{d D_{[a]}(r)}{(- d^c) \phi},
   & \text{$m = 1$,} \\
  {1 \over m-1} 
  \mathop{\int}_{d D_{[a]}(r)}{(-d^c)
  \phi\wedge\upsilon_p^{m-1}}, & \text{$m >
1$,} 
  \end{cases}
\end{equation}
for small $\,r > 0$ (cf. B\^ocher \cite{mB95} (see also 
\cite{gE28}), and \cite{ABR01}, pp. 213-214). If $\,\phi
\,$ is a semi-harmonic function with an isolated 
singularity at $\,a,\,$ then the definition 
\eqref{E:harm.res.} is independent of the pseudo-radius 
(as the Stokes' theorem easily shows).

\begin{example}\label{E:harm.res.} 
Let $\,p : X \to \Omega\,$ be a Riemann domain 
and $\,h \in {\mathfrak H}(X).\,$ Let $\,
a \in X\,$ and $\,\alpha, \,s \in [0, \infty)\,$
be constants. Define $\,\phi : X \backslash
p^{-1}(p(a)) \to \mathbb{C}\,$ by

\[
 \phi (z) :\> = \,{(\log \|p^{[a]}\|^2)^\alpha
 \,h(z) \over \|{p^{[a]}(z)}\|^{2m-2 +s}}. 
\]
Let $\,U \,$ be a pseudo-ball at $\,a\,$ of 
radius $\,r_0 > 0$. Then $\forall r \in
(0,r_0)$,

\begin{equation}\label{E:1-dim.res.}
Res_a (\phi,r) = \,\big[ - {\alpha \,(\log
\,r^2)^{\alpha -1} \over r^s} +
{{s\over 2} \over r^s} \,(\log
\,r^2)^\alpha \big]\,\nu_p(a)
\,h(a), \quad m = 1,
\end{equation}

\begin{equation}\label{E:m-dim.res.}
\hskip 4 pt Res_a (\phi,r) = \,\big[ - {\alpha
\,(\log \,r^2)^{\alpha -1} \over  (m-1)r^s} +
{m-1+{s\over 2} \over (m-1)\,r^s} \,(\log
\,r^2)^\alpha \big]\,\nu_p(a)
\,h(a), \quad m > 1. 
\end{equation}
\end{example}

\begin{proof}
  Let $\,k = {s \over 2}$, and $\, r \in
(0,r_0)$.  Then $\forall \alpha > 0\,$ one has

\[
  \begin{split} 
    {{\rm j}_{a,r}}^* (d^c \phi)\, & = \,{(\log\,r^2)^\alpha\, 
{{\rm j}_{a,r}}^* d^c h \over r^{2m-2 + 2k}} \,
  + \,{\alpha\,(\log\,r^2)^{\alpha -1}{{\rm j}_{a,r}}^* 
(h\,d^c \|p^{[a]}\|^2) \over r^{2m + 2k}}\\
  & \, - (m-1 + k)\,{(\log \,r^2)^\alpha\, {{\rm j}_{a,r}}^* 
(h \,d^c (\|{p^{[a]}(z)}\|^2)) \over r^{2m + 2k}}.
  \end{split}
\]
It follows from this relation and the
semi-harmonicity of $\,h\,$ that, upon integrating 
the form $\,d^c \phi\wedge \upsilon_p^{m-1}\,$ 
over $\,dU_{[a]}(r),\,$ the first term vanishes,
the second and the third term yield the number 
$\,{\alpha \,(\log \,r^2)^{\alpha -1} \over r^s}\,
\nu_p(a) \,h(a),\,$ respectively, $\,{-(m-1+ k)\,
(\log \,r^2)^\alpha \over r^s}\,\nu_p(a)\,h(a),\,$ 
by invoking the identity \eqref{E:rel.spher. and solid MV} 
and the mean-value properties of $\,h.\,$ The case 
$\,\alpha = 0\,$ is similar. Hence the relations 
\eqref{E:1-dim.res.}-\eqref{E:m-dim.res.} are proved. 
\end{proof}

\begin{proposition}\label{P:res. and semi-harm.}
A locally integrable function $\,\phi\,$ in $\,D\,$ 
is semi-harmonic iff $\,\phi \in C^1 (D^*)$ and there
exists at each $\,a \in D^*\,$ a pseudo-ball $\,U
\subseteq D^*\,$ {\rm (}of radius $\,r_a${\rm )}
such that 

\begin{equation}\label{E:vani. res.}
\,Res_a (\phi,r) = \,0, \quad \forall r \in 
(0, r_a).
\end{equation}
\end{proposition}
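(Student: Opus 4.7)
Plan. I would prove both implications by reducing to a Euclidean computation on a pseudo-ball $\,U \subseteq D^*\,$ at $\,a,\,$ on which $\,p : U \to U' = \mathbb{B}_{[a']}(r_a)\,$ is a biholomorphism, and by relating $\,Res_a(\phi, r)\,$ to the radial derivative of the spherical average of the push-down $\,\hat\phi\,$ of $\,\phi|_U$.

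For the implication ``$\Rightarrow$'', suppose $\,\phi\,$ is semi-harmonic, so $\,\phi \in C^2(D \setminus A)\,$ for some thin analytic $\,A \subseteq D\,$ with $\,dd^c(\phi\, \upsilon_p^{m-1}) = 0\,$ there. By Lemma \ref{L:Dir. and semi-Laplace oper.}(i), on $\,D^* \setminus A\,$ the push-down $\,\hat\phi\,$ is classically harmonic, and since $\,\phi\,$ is also locally integrable, a standard thin-set removability argument for harmonic functions (or an appeal to Corollary \ref{C:removable sing. shf.}) extends $\,\hat\phi\,$ smoothly across $\,A \cap D^*,\,$ yielding $\,\phi \in C^\infty(D^*) \subseteq C^1(D^*).\,$ For any pseudo-ball $\,U \subseteq D^*\,$ at $\,a,\,$ Stokes' theorem (using $\,d\upsilon_p = 0$) together with Lemma \ref{L:Dir. and semi-Laplace oper.} gives
\[
\int_{dU_{[a]}(r)} d^c\phi \wedge \upsilon_p^{m-1}
= \int_{U_{[a]}(r)} dd^c\phi \wedge \upsilon_p^{m-1}
= \frac{1}{4m}\int_{U_{[a]}(r)} (\triangle_{p_U}\phi)\, \upsilon_p^m = 0,
\]
so $\,Res_a(\phi, r) = 0\,$ for all admissible $\,r.\,$

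For the converse, assume $\,\phi \in L^1_{{\rm loc}}(D) \cap C^1(D^*)\,$ with $\,Res_a(\phi, r) = 0\,$ on a pseudo-ball $\,U \subseteq D^*\,$ at each $\,a \in D^*.\,$ By Proposition \ref{P:normal deri.}, the residue is a nonzero constant multiple of $\,\int_{\mathbb{S}_{[a']}(r)} \partial_\nu \hat\phi\, d\sigma.\,$ Setting $\,G(r) := \int_{\mathbb{S}^{2m-1}} \hat\phi(a' + r\omega)\, d\omega,\,$ the chain rule yields $\,G'(r) = r^{-(2m-1)} \int_{\mathbb{S}_{[a']}(r)} \partial_\nu \hat\phi\, d\sigma = 0,\,$ so $\,G,\,$ and hence the spherical mean $\,[\phi \rfloor U]_{a,r},\,$ is constant in $\,r.\,$ By Proposition \ref{P:deg. form.} (noting $\,\nu_p(a) = 1\,$ on $\,X^*$), this constant equals $\,\phi(a),\,$ giving the spherical mean-value property at $\,a.\,$ The classical converse of the mean-value theorem (equivalently, Theorem \ref{T:charac. semi-harm.} applied to the unramified Riemann domain $(D^*, p|_{D^*})$) then implies $\,\hat\phi\,$ is harmonic in $\,U',\,$ so $\,\phi \in C^\infty(U)\,$ with $\,\triangle_{p_U}\phi = 0.\,$ Since $D^*$ is covered by such pseudo-balls and $\,D \setminus D^*\,$ is thin analytic in $\,D\,$ (as a subset of the branch locus of $\,p$), the function $\,\phi\,$ is semi-harmonic in $\,D.\,$

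The main obstacle is the removable-singularity step in the forward direction, which upgrades ``$C^2$ off a thin analytic set'' to ``$C^\infty$ on all of $D^*$''; here one must invoke either the classical result for harmonic functions on $U' \setminus p(A \cap D^*)$ together with the $L^1_{{\rm loc}}$-hypothesis, or the paper's Corollary \ref{C:removable sing. shf.} after noting that semi-harmonicity implies weak harmonicity. The remaining bookkeeping of normalizing constants linking $\,Res_a(\phi,r),\,$ $\,\int_{\mathbb{S}_{[a']}(r)} \partial_\nu \hat\phi\, d\sigma,\,$ $\,G'(r),\,$ and the paper's spherical mean $\,[\phi \rfloor U]_{a,r}\,$ is routine and can be pinned down by testing each identity on $\,\phi \equiv 1.\,$
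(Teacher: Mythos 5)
Your argument follows the paper's proof essentially step for step: in the converse direction you identify $\,Res_a(\phi,r)\,$ (up to a nonzero constant depending on $\,r\,$) with the $\,r$-derivative of the spherical mean $\,[\phi \rfloor U]_{a,r},\,$ conclude that this mean is constant and, by Proposition \ref{P:deg. form.}, equal to $\,\nu_p(a)\,\phi(a),\,$ and then pass to semi-harmonicity via the mean-value characterizations (Theorem \ref{T:charac. semi-harm.} together with Theorem \ref{T:weak harm. and semi-harm.} and Remark \ref{R:MVP at a point}), which is exactly the paper's chain of reasoning; the forward direction likewise goes through Corollary \ref{C:removable sing. shf.} and the Stokes theorem, as in the paper. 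The one point to correct is your parenthetical fallback in the forward direction: local integrability alone does not make a harmonic function extend across a thin analytic set (the Newtonian kernel $\,\|z'-a'\|^{2-2m}\,$ is $\,L^1_{{\rm loc}}\,$ and harmonic off a point, yet has nonvanishing residue there), so the smoothness of $\,\phi\,$ on $\,D^*\,$ must indeed be obtained through Corollary \ref{C:removable sing. shf.} (hence through weak harmonicity and forward square-integrability), exactly as the paper does.
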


\begin{proof}   
   If $\,\phi \,$ is semi-harmonic in $\,D$, then
the relation \eqref{E:vani. res.} holds at each point 
$\,a\in D^*\,$ by Corollary \ref{C:removable sing. shf.} 
and the Stokes theorem. To prove the 
converse, assume $\,U\,$ is a pseudo-ball at 
$\,a \in D^*\,$ of radius $\,r_a\,$ satisfying the 
condition \eqref{E:vani. res.}. Let $\,V^j\,$ and
$\,\hat \phi_j,\, 1 \le j \le s,\,$ be the same
as in the proof of Lemma \ref{L:const. aver.}. 
Denoting by $\,{\mathfrak j}_r,\; r \in
(0,r_a),\,$ the map: $\mathbb{S} = \mathbb{S} (1)
\to \, \mathbb{S}_{[a']}(r), \;{\mathfrak j}_r
({\mathfrak z}) = a' + r {\mathfrak z},\,$ one has

\[
[\phi \rfloor V^j]_{a,\rho} =
\mathop{\int}_{\mathbb{S}}{\hat \phi_j (a'
+ \rho \mathfrak z) \,{\mathfrak
j}_\rho^*(\sigma_{a'})} =
\mathop{\int}_{\mathbb{S}}{\hat \phi_j (a' +
\rho \mathfrak z) \,\sigma_0}.
\]
On the other hand,

\[
{d \over d\rho}\, \big([\phi \rfloor
V^j]_{a,\rho} \big)
\big\rfloor_{\rho = r}\,= \, \mathop{\int}_{\mathbb{S}}{
(\partial_{\mathfrak z}\,\hat \phi_j)\,(a' + r
{\mathfrak z}) \,\sigma_0}. 
\]
Therefore the relations \eqref{E:rad.deri.b}, 
\eqref{E:rad.deri.c} and \eqref{E:harm.res.}
imply that

\[
  {d \over d\rho} \, [\phi \rfloor
  U]_{a,\rho}\,\big\rfloor_{\rho = r} =
  \begin{cases} 
  ({- 4\pi \over r})\,Res_a (\phi,r),
  & \text{$m =1$,}  \\ 
  {2(1-m)\over r^{2m-1}}\, Res_a (\phi,r),
  & \text{$m > 1$.}
  \end{cases}
\]
Hence $\,[\phi \rfloor U]_{a,r} = {\rm
const.}, \; \forall r \in(0, r_a).\,$
Now the semi-harmonicity of $\,\phi\,$ follows from 
Lemma \ref{L:near harm.}, Theorem \ref{T:charac. semi-harm.}
and Remark \ref{R:MVP at a point} to Theorem 
\ref{T:weak harm. and semi-harm.}.
\end{proof}

   Another important case of an Euler type vector 
field associated with a smooth boundary manifold is 
the $\bar \partial$-Neumann vector field. For the 
definition, let $\,\rho = 0\,$ be a local $C^1$ 
defining equation of $\,d D\,$ in an open set $\,U 
\subseteq X^*\,$ with $\,d \rho \not = 0\,$ on $\,d D 
\cap U.\,$ Define in $\,U\,$ the 
$\bar \partial$-{\it Neumann vector field}

\begin{equation}\label{E:Neum.vec.}
\bar \partial_n \>:= \, {1 \over
\| \nabla \rho \|} \, \bar {\mathcal E}_{\rho}. 
\end{equation}
It follows from the relations \eqref{E:Euler vec.field} 
and \eqref{E:Neum.vec.} that, $\forall\phi \in C^1 
(\overline U)$,

\begin{equation}\label{E:Neum.deri.}
(\bar \partial_n \phi) (\zeta) \, = \, 2 
\sum_{j=1}^m {\rho_j (\zeta )\,  {\partial \phi
\over\partial \bar p_j} (\zeta)},\quad \zeta \in
U \cap dD.
\end{equation}
(For an alternative definition of the $\bar 
\partial$-Neumann derivative, see \cite{aK95}, 
p. 62). The formula \eqref{E:Neum.deri.} yields 
a derivative of $\,\phi\,$ along a (complex) 
direction in the complex line passing through 
the unit outward normal to $\,U \cap d D\,$ at 
$\,\zeta.\,$ It is intrinsically defined. This 
can be seen as follows. Consider the $\,
(m-1,m)$-form

\begin{equation}\label{E:mu-form}
\mu_{_\phi} : = \,\sum_{k=1}^m {(-1)^{m+k-1} 
({\partial \phi \over \partial \bar p_k})\,
d p_{_{[k]}} \wedge d \bar p}
\end{equation}
(\cite{aK95}, p. 2). Using the first identity in 
\eqref{E:loc.vol.form.a,b}, it is easy to deduce 
from the definition \eqref{E:mu-form} 
and the equation \eqref{E:Neum.deri.} the 
following:

\begin{lemma}\label{L:Intri.Neum.deri.}
\begin{equation}\label{E:Intri.Neum.deri.}
 (\bar \partial_n \phi) \, d\sigma_{_{U \cap d
D}} \,= \,2^{1-m}i^{-m} \,{\rm j}_{_{U \cap d
D}}^*\,\mu_{_\phi}.
\end{equation}
\end{lemma}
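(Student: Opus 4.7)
The proof plan is a direct computation: pull back the form $\mu_{_\phi}$ term-by-term using the first identity in \eqref{E:loc.vol.form.a,b}, then recognize the resulting sum as (a constant multiple of) the expression for $\bar\partial_n\phi$ given by \eqref{E:Neum.deri.}.

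Concretely, first I would expand
\[
{\rm j}_{_{U\cap dD}}^*\mu_{_\phi} \;=\; \sum_{k=1}^m (-1)^{m+k-1}\,\frac{\partial \phi}{\partial \bar p_k}\;{\rm j}_{_{U\cap dD}}^*\bigl(dp_{_{[k]}}\wedge d\bar p\bigr),
\]
which is legitimate since the coefficients $\partial\phi/\partial\bar p_k$ are functions and commute with pullback. Next I would substitute the first formula of \eqref{E:loc.vol.form.a,b} for each index $k$, obtaining
\[
{\rm j}_{_{U\cap dD}}^*\mu_{_\phi} \;=\; 2^m i^m\sum_{k=1}^m (-1)^{m+k-1}(-1)^{m+k-1}\,\rho_k\,\frac{\partial \phi}{\partial \bar p_k}\,d\sigma_{_{U\cap dD}},
\]
and the two sign factors cancel to give $2^m i^m \sum_{k=1}^m \rho_k\,(\partial\phi/\partial\bar p_k)\,d\sigma_{_{U\cap dD}}$. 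Finally, using the definition \eqref{E:Neum.deri.} of $\bar\partial_n\phi$, the sum $\sum_k \rho_k\,\partial\phi/\partial\bar p_k$ equals $\tfrac{1}{2}(\bar\partial_n\phi)$, so that ${\rm j}_{_{U\cap dD}}^*\mu_{_\phi} = 2^{m-1}i^m\,(\bar\partial_n\phi)\,d\sigma_{_{U\cap dD}}$. Multiplying by $2^{1-m}i^{-m}$ yields the claimed identity \eqref{E:Intri.Neum.deri.}.

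There is no real obstacle; the only point requiring care is the bookkeeping of sign factors, specifically verifying that the $(-1)^{m+k-1}$ in the definition of $\mu_{_\phi}$ precisely cancels the $(-1)^{m+k-1}$ appearing in the pullback formula, leaving a positive sum. Everything else is a direct application of the already-established relation \eqref{E:loc.vol.form.a,b} and the defining formula \eqref{E:Neum.deri.}, so the argument can be presented in a few lines.
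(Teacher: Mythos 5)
Your proposal is correct and follows exactly the route the paper indicates: expand $\mu_{_\phi}$ via its definition \eqref{E:mu-form}, apply the first identity of \eqref{E:loc.vol.form.a,b} term-by-term so the two factors of $(-1)^{m+k-1}$ cancel, and identify $2\sum_k\rho_k\,\partial\phi/\partial\bar p_k$ with $\bar\partial_n\phi$ via \eqref{E:Neum.deri.}. The paper leaves these details to the reader ("it is easy to deduce"), and your sign bookkeeping and the resulting constant $2^{m-1}i^m$ are exactly right.
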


     This relation implies that the definition 
\eqref{E:Neum.vec.} is independent of the choice of 
the local defining equation of $\,d D.\,$ Some
applications of the $\bar \partial$-Neumann derivative 
are given in \cite{aK95} and \cite{cT07}.

\end{document}